\def\disp{\displaystyle}
\def\dref#1{(\ref{#1})}
\theoremstyle{plain}
\newtheorem{theorem}{Theorem}[section]
\newtheorem{lemma}{Lemma}[section]
\theoremstyle{definition}
\newtheorem{remark}{Remark}[section]
\numberwithin{equation}{section}
\begin{document}

\author{Zhi-An Wang}
\address{Department of Applied Mathematics, Hong Kong Polytechnic University, Hung Hom, Hong Kong}
\email{mawza@polyu.edu.hk}

\author{Jiashan Zheng}
\address{School of Mathematics and Statistics Science, Ludong University, Yantai 264025,  P.R. China}
\email{zhengjiashan2008@163.com}

\title[parabolic Keller-Segel system with signal-dependent motilities]{Global boundedness of the fully parabolic Keller-Segel system with signal-dependent motilities}

\maketitle

%
%
\begin{quote}
{\bf Abstract}: This paper establishes the global uniform-in-time  boundedness of solutions to the following Keller-Setel system with signal-dependent diffusion and chemotaxis
\begin{eqnarray*}
 \left\{\begin{array}{ll}
  u_t=\nabla\cdot(\gamma(v)\nabla u-u\phi(v)\nabla v),\quad &
x\in \Omega, t>0,\\
 \disp{ v_t=d\Delta v- v+u},\quad &
x\in \Omega, t>0\\
 \end{array}\right.
\end{eqnarray*}
in a bounded domain
 $\Omega\subset\mathbb{R}^N(N\leq4)$ with smooth boundary, where the density-dependent motility functions $\gamma(v)$ and $\phi(v)$  denote the  diffusive and chemotactic coefficients, respectively.
 The model was originally proposed by Keller and Segel in \cite{Keller-1}
to describe the aggregation phase of Dictyostelium discoideum cells, where the two motility functions satisfy a proportional relation $\chi(v)=(\alpha -1)\gamma'(v)$ with $\alpha>0$ denoting the ratio of effective body length (i.e. distance between receptors) to the step size. The major technical difficulty in the analysis is the possible degeneracy of diffusion.
In this work, we show that if $\gamma(v)>0$ and $\phi(v)>0$ are smooth on $[0,\infty)$ and satisfy
$$\inf_{v\geq0}\frac{d\gamma(v)}{v\phi(v)(v\phi(v)+d-\gamma(v))_+}>\frac{N}{2},$$
then the above Keller-Segel system  subject to Neumann boundary conditions admits classical solutions uniformly bounded in time.  The main idea of proving our results is the estimates of a weighted functional $\int_\Omega u^{p}v^{-q}dx$ for $p>\frac{N}{2}$ by choosing a suitable exponent  $p$ depending on the unknown $v$, by which we are able to derive a uniform $L^\infty$-norm of $v$ and hence rule out the diffusion degeneracy.

\vspace{0.3cm}
\noindent {Key words:} Keller-Segel system, density-dependent motilities, global boundedness, weighted functional

\noindent {2010 Mathematics Subject Classification}:~  35K20, 35K55,
35K59, 92C17

\end{quote}
\section{Introduction}
In this paper, we consider the following Keller-Segel system with signal-dependent motilities
\begin{equation}\label{eq-1}
 \left\{\begin{array}{ll}
  u_t=\nabla\cdot(\gamma(v)\nabla u-u\phi(v)\nabla v),\quad &
x\in \Omega, t>0,\\
 \disp{ \tau v_t=d\Delta v +u- v},\quad &
x\in \Omega, t>0,\\
\partial_\nu u=\partial_\nu v=0,\quad &
x\in \partial\Omega, t>0,\\
\disp{u(x,0)=u_0(x)}, \ v(x,0)= v_0(x),\quad  &
x\in \Omega\\
 \end{array}\right.
\end{equation}
in a bounded domain $\Omega\subset R^N(N\geq1)$ with smooth boundary $\partial\Omega$, where $\tau=1$ and $d > 0$ is
the chemical diffusion rate and $\partial_\nu=\frac{\partial }{\partial \nu}$
denotes the derivative with respect to the unit outer normal vector $\nu$ of $\partial\Omega$;
$u(x, t)$ represents the cell density and $v(x, t)$ denotes the chemical signal concentration  at position $x$ and time $t$.
 The system \dref{eq-1} was proposed by Keller and Segel in \cite{Keller-1}
to describe the aggregation phase of Dictyostelium discoideum (Dd) cells in response to the chemical signal cyclic adenosine
monophosphate (cAMP) secreted by Dd cells, where the signa-dependent diffusivity $\gamma(v)$ and chemotactic sensitivity function $\phi(v)$ satisfy the following proportionality relation
\begin{equation}\label{relation}
\phi(v)=(\alpha-1)\gamma'(v),
\end{equation}
where $\alpha>0$ denotes the ratio of effective body length (i.e. distance between receptors) to the step size (see \cite{Keller-1} for details).

When $\gamma(v)$ and $\phi(v)$ are constant, the Keller-Segel system \dref{eq-1} is called the minimal chemotaxis model \cite{Nanjundiah} which has been extensively studied in the literature (see review paper \cite{Bellomo-1, Horstmann2710} and the references therein), and ``critical mass'' is perhaps the most prominent phenomenon amongst abundant results. When $\gamma(v) = 1$ and $\phi(v) =\frac{\chi}{v}$, the system \dref{eq-1} becomes the following so-called singular Keller-Segel system proposed in \cite{Keller-2}
\begin{equation}\label{eq-2}
 \left\{\begin{array}{ll}
  u_t=\Delta u-\chi\nabla\cdot(\frac{u}{v}\nabla v),\quad &
x\in \Omega, t>0,\\
 \disp{\tau v_t=\Delta v +u- v},\quad &
x\in \Omega, t>0,\\
\partial_\nu u=\partial_\nu v=0,\quad &
x\in \partial\Omega, t>0,\\
\disp{u(x,0)=u_0(x)}, \ v(x,0)= v_0(x),\quad  &
x\in \Omega
 \end{array}\right.
\end{equation}
which exhibits different behaviors from the minimal chemotaxis model and no `critical mass'' phenomenon was identified (cf. \cite{Bilerss-1,Fujie-1,Nagai-3, Lankeit-1,Winkler-5,Stinn-1}.
In particular, Nagai and   Senba (\cite{Nagai-3, Nagai-4}) proved that all {radial classical} solutions of  system \dref{eq-2} with $\tau=0$ are global-in-time if either $N\geq 3$ with $\chi <\frac{2}{N-2}$, or $N= 2$ with arbitrary $\chi > 0$, whereas the the radial solution may blow up if $\chi>\frac{2N}{N-2}$ and $N\geq 3$. When $\tau=1$, $N \geq 2$ and $\chi<\sqrt{\frac{2}{N}}$, Fujie (\cite{Fujie-5}) showed  that the system \dref{eq-2} possesses global classical solutions with uniform-in-time boundedness based on an observation that $v$ has a positive lower bounded. We refer to \cite{Fujie-2} for an exhaustive review of works related to the system \eqref{eq-2}. When $v$ is a nutrient consumed by bacteria, namely the second equation of \eqref{eq-2} is replaces by $v_t=\Delta v-uv^m (m>0$), the singular Keller-Segel was proposed in \cite{Keller-3} to describe the wave prorogation generated by bacterial chemotaxis (see \cite{Wang-review} for mathematical results).

In contrast to plenty of results developed for the minimal and singular Keller-Segel systems, when both $\gamma(v)$ and $\phi(v)$ are non-constant and ruled by the relation \eqref{relation}, the results obtained for \dref{eq-1} are much less. The mathematical analysis of \dref{eq-1} with signal-dependent diffusive and chemotactic coefficients  has to cope with considerable challenges caused by the possible degeneracy of diffusion (i.e.  $\gamma(v)$ may touch down to zero), and hence the conventional methods for the constant diffusion are inapplicable to \eqref{eq-1} and new ideas are much in demand. The existing results in the literature are only confined with a specific case $\alpha=0$ in \eqref{relation}, which implies that $\phi(v) = -\gamma'(v)$ and hence reduces the system to
\begin{equation}\label{eq-4}
 \left\{\begin{array}{ll}
  u_t=\Delta(\gamma(v)u),\quad &
x\in \Omega, t>0,\\
 \disp{ \tau v_t=d\Delta v +u- v},\quad &
x\in \Omega, t>0.\\
\partial_\nu u=\partial_\nu v=0,\quad &
x\in \partial\Omega, t>0,\\
\disp{u(x,0)=u_0(x)}, \ v(x,0)= v_0(x),\quad  &
x\in \Omega.
 \end{array}\right.
\end{equation}
If $\gamma(v)=\frac{c_0}{v^k}$ (i.e. algebraically decreasing) with $k>0$, it was proved in \cite{Yoon-1} that global bounded solutions exist in any dimensions provided that $c_0>0$ is small enough. The global existence result was later extended to the parabolic-elliptic case model \eqref{eq-4} (i.e. $\tau=0$) in \cite{Ahn-1} for any $0<k<\frac{2}{n-2}$ and $c_0>0$. Recently the global existence of weak solutions of \eqref{eq-4} with $\tau=1$ with large initial data was established in \cite{Kim-NARWA-2019}. When $\gamma(v)=e^{-\chi v}$ (exponentially decreasing), the size of initial cell mass is crucial for the global dynamics. A critical mass phenomenon, different from the case of algebraically decreasing $\gamma(v)$,  was identified in \cite{Jin-3} in two dimensions:  if $n=2$, there is a critical number $m=4\pi/\chi>0$ such that the solution of \eqref{eq-4} with $\tau=d=1$ may blow up if the initial cell mass $\|u_0\|_{L^1(\Omega)}>m$ while global bounded solutions exist if $\|u_0\|_{L^1(\Omega)}<m$. This result was further refined in \cite{FujieJiang2020-2} showing that the blowup occurs at the infinity time for $\tau=0$ and in \cite{FujieJiang2020-1} for $\tau=1$. In \cite{Burger}, the global existence of weak solutions for arbitrary large masses and space dimension was proved and infinite-time blowup was also shown by minimizing the entropy.

To describe the stripe pattern formation observed in the experiment of \cite{Liu-1},  a logistic source was added into the first equation of \dref{eq-4} in \cite{FuFuFugttt}
\begin{equation}\label{eq-5}
 \left\{\begin{array}{ll}
  u_t=\Delta(\gamma(v) u)+\mu u(1-u),\quad &
x\in \Omega, t>0,\\
 \disp{ \tau v_t=d\Delta v +u- v},\quad &
x\in \Omega, t>0,\\
 \end{array}\right.
\end{equation}
where $\gamma' (v) < 0$ and $\mu>0$ denotes the intrinsic cell growth rate. The system  \eqref{eq-5}  was called density-suppressed motility model in \cite{FuFuFugttt}.
It was shown in \cite{Jin-1} that the system \eqref{eq-5} has a unique global classical solution in two dimensional spaces if $\gamma(v)$ satisfies the following: $\gamma(v)\in C^3([0,\infty)), \gamma(v)>0\ \  \mathrm{and}~\gamma'(v)<0 \ \ \mathrm{on} ~[0,\infty)$, $\lim\limits_{v \to \infty}\gamma(v)=0$ and $\lim\limits_{v \to \infty}\frac{\gamma'(v)}{\gamma(v)}$ exists. Moreover, the constant steady state $(1,1)$ of \eqref{eq-5} is proved to be globally asymptotically stable if
$
\mu>\frac{K_0}{16}$ where $ K_0=\max\limits_{0\leq v \leq \infty}\frac{|\gamma'(v)|^2}{\gamma(v)}$. Recently, similar results have been extended to higher dimensions ($n\geq 3$) for large $\mu>0$ in \cite{Wang-1} and for weakers conditions on $\gamma(v)$ in \cite{FujieJiang2020-1, JW-DCDSB-2020}. On the other hand, for small $\mu>0$, the existence/{\color{black}nonexistence} of nonconstant steady states  of \eqref{eq-5} was rigorously established under some constraints on the parameters in \cite{Ma-1} and the periodic pulsating wave is analytically obtained by the multi-scale analysis.    When $\gamma(v)$ is a piecewise constant function, the dynamics of discontinuity interface was studied in \cite{Smith-1} and the existence of discontinuous traveling wave solution was established in \cite{Lui}. When $\gamma(v)$ is smooth, the existence of smooth traveling wavefront solutions was proved in \cite{Li-Wang-2020}.

From the results recalled above, we see that when $\gamma(v)$ and $\phi(v)$ satisfy the relation \eqref{relation}, the existing results are all confined to the case $\alpha=0$ and no results are available to the case $\alpha\ne 0$. As a first attempt, the parabolic-elliptic system \dref{eq-1} with $\tau= 0$ was recently considered in \cite{Wang-2} by attaching the following hypotheses on $\gamma(v)$ and $\phi(v)$:

\begin{itemize}
\item[(L1)] $\gamma(v) \in C^2([0,\infty))$ and $\gamma(v)> 0$ for all $v \in [0,\infty)$.
\vspace{0.1cm}

\item[(L2)]
\begin{enumerate}
\item[(a)] $\phi(v) \in C^2([0,\infty))$, $\phi(v)\geq 0$ and $\phi'(v)<0$ for $v\in [0,\infty)$;
\item[(b)] $\lim\limits_{v \to \infty}v\phi(v)<\infty$ if $n>3$.
\end{enumerate}
\vspace{0.1cm}

\item[(L3)] $\inf\limits_{v\geq 0} \disp{\frac{\gamma(v) |\phi'(v)|}{|\phi(v)|^2}>\frac{n}{2}}$.
\end{itemize}

Then it was shown \cite{Wang-2} that if (L1)-(L3) are satisfied and the solution satisfies an additional constraint
\begin{equation}\label{extra}
\int_\Omega \phi^{-p}(v)dx < \infty~~~\mbox{for some}~~~~ p >\frac{N}{2},
\end{equation}
then the system \eqref{eq-1} with $\tau=0$ admits a unique global classical solution with uniform-in-time bound for given initial data $(u_0, v_0)\in [W^{1,\infty}(\Omega)]^2$. Moreover when $\gamma(v)$ and $\phi(v)$ are algebraically or exponentially decreasing functions, the conditions ensuring the global boundedness are specified.

The purpose of this paper is to investigate the fully parabolic system \eqref{eq-1} (i.e. $\tau=1$) and find conditions warranting the global boundedness of solutions. The proof in \cite{Wang-2} fully made use of the elliptic structure of the second equation of \eqref{eq-1} to construct a positive definite quadratic form to derive the essential $L^p$-estimates ($p>N$). But this idea was not applicable to the parabolic system \eqref{eq-1} with $\tau=1$.
The key step in this paper is to  build a uniform estimate for the weighted integral $\int_\Omega u^{p} v^{-q} dx$ with some $p,q > 0$ where the exponent $q$ depends on the unknown $v$, which develops the ideas in the literature (cf.  \cite{Fujie-5, Winkler-5}) where the exponent is a constant determined by a simple quadratic equation. To this end, we derive  an auxiliary lemma (i.e. Lemma \ref{Lmma-1}) to elucidate the complexity involved in this $v$-dependent exponent. Then we use the estimate for the weighted integral $\int_\Omega u^{p} v^{-q} dx$ to derive the $W^{1,p}$-estimate for $v$ with $p>N$ which yields the $L^\infty$-bound of $v$  and hence rules out the diffusion degeneracy. Finally we employ the Moser iteration to establish the global boundeness of solutions.

The hypotheses attached to the motility functions $\gamma(v)$ and $\phi(v)$ are the following
\begin{quote}
\begin{itemize}
\item[] ${(H_1)}$ $\gamma(v)\in C^2 ([0,\infty))$ and $\gamma(v) > 0$ for all $v\in[0,\infty)$.
\item[]${(H_2)}$ $\phi(v)\in C^2 ([0,\infty))$ and $\phi(v)\geq 0$ for all $v\in[0,\infty)$.
\item[] ${(H_3)}$ $\gamma(v)$ and $\phi(v)$ fulfills
$$\inf_{v\geq0}\frac{d\gamma(v)}{v\phi(v)(v\phi(v)+d-\gamma(v))_+}>\frac{N}{2}.$$
\end{itemize}
\end{quote}
For the initial data $(u_0,v_0)$, we assume that
\begin{equation}\label{eq-6}
\displaystyle{(u_0, v_0)\in [W^{1,\infty}(\Omega)]^2~~\mbox{with}~~u_0, v_0\geq0~~\mbox{in}~~\Omega~~\mbox{and}~~u_0, v_0\not\equiv0}.
\end{equation}
Then our main results are stated in the following theorem.
\begin{theorem}\label{theorem3}
 Let $\Omega\subset\mathbb{R}^N(2\leq N\leq 4)$ be a bounded  domain with smooth boundary and initial data satisfy \dref{eq-6}.
If  $\gamma$ and $\phi$ satisfy hypotheses $(H_1)$-$(H_3)$, then problem \eqref{eq-1} has a unique global classical solution $(u, v) \in [u\in C^0(\bar{\Omega}\times[0,\infty))\cap C^{2,1}(\bar{\Omega}\times(0,\infty))]^2$ satisfying
 $$
\|u(\cdot, t)\|_{L^\infty(\Omega)}+\| v(\cdot, t)\|_{W^{1,\infty}(\Omega)}\leq C~~ \mbox{for all}~~ t\in(0,\infty),
$$
where $C>0$ is a constant $C$ independent of $t$.
\end{theorem}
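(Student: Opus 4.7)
The plan is to pass through the regularity ladder by first securing a uniform bound on the weighted moment $\Phi(t) := \int_\Omega u^p v^{-q}\,dx$ with a carefully chosen exponent $p$. As a preliminary, standard quasilinear parabolic theory (of Amann type) yields a maximal classical solution $(u,v)$ on $[0,T_{\max})$ with the extensibility criterion that either $T_{\max}=\infty$ or $\|u(\cdot,t)\|_{L^\infty(\Omega)} + \|v(\cdot,t)\|_{W^{1,\infty}(\Omega)}\to\infty$ as $t\nearrow T_{\max}$. Mass is conserved, $\|u(\cdot,t)\|_{L^1(\Omega)}=\|u_0\|_{L^1(\Omega)}$, and the representation $v(t)=e^{t(d\Delta-1)}v_0 + \int_0^t e^{(t-s)(d\Delta-1)}u(s)\,ds$ together with the maximum principle delivers a strictly positive pointwise lower bound $v\geq v_\star>0$ throughout the domain of existence.

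The key computation is to differentiate $\Phi$ in time, use both PDE's, and integrate by parts twice, producing a quadratic form in $(\nabla u,\nabla v)$ whose coefficients are built out of $\gamma(v),\phi(v),d,p,q$, together with the intrinsically dissipative term $-q\int_\Omega u^{p+1}v^{-q-1}\,dx$ arising from the $+u$ source in the $v$-equation. The design principle, extending the ideas of \cite{Fujie-5,Winkler-5}, is to pick $(p,q)$ so that the discriminant of this quadratic form is pointwise nonpositive; a direct calculation shows that the resulting feasibility condition is exactly the quadratic inequality in $p$ encoded by hypothesis $(H_3)$, and the room in the denominator $v\phi(v)(v\phi(v)+d-\gamma(v))_+$ is precisely what allows $p>N/2$. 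Because the quantity inside the infimum in $(H_3)$ is $v$-dependent, no single constant $p$ need be admissible uniformly in $v$, and the novelty of the paper is to let $p=p(v)$ vary; the auxiliary Lemma \ref{Lmma-1} formalises this construction while providing the bookkeeping required to absorb the chain-rule error terms of the form $u^{p(v)}(\log u)\,p'(v)\partial_t v$ and $u^{p(v)}(\log u)\,p'(v)\nabla v\cdot\nabla(\cdot)$ into the good dissipative part.

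Once the quadratic form is tamed, the residual positive contributions can be absorbed by Young's inequality and the dissipative $-q\int u^{p+1}v^{-q-1}$ term, yielding a Gronwall-type inequality $\Phi'(t)+\alpha\Phi(t)\leq C$ and hence $\sup_{t<T_{\max}}\Phi(t)<\infty$. Combined with $v\geq v_\star$, this produces $\|u(\cdot,t)\|_{L^{p_0}(\Omega)}\leq C$ for some $p_0>N/2$. Heat-semigroup smoothing applied to the $v$-equation then yields $\|v(\cdot,t)\|_{W^{1,\sigma}(\Omega)}\leq C$ for every $\sigma<Np_0/(N-p_0)_+$; choosing $\sigma>N$ gives $\|v(\cdot,t)\|_{L^\infty(\Omega)}\leq C$. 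With $v$ now trapped between positive constants, $\gamma(v)\in[\gamma_\star,\gamma^\star]$, and the first equation becomes a uniformly parabolic equation with bounded coefficients. A standard Moser iteration (or Alikakos-type bootstrap) delivers $\|u(\cdot,t)\|_{L^\infty(\Omega)}\leq C$, and feeding this back into maximal Sobolev regularity for the $v$-equation upgrades the gradient estimate to $\|v(\cdot,t)\|_{W^{1,\infty}(\Omega)}\leq C$, contradicting the extensibility criterion unless $T_{\max}=\infty$.

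The main obstacle is the coercivity analysis in the second paragraph: the algebraic margin in $(H_3)$ is tight, and letting $p$ depend on $v$ introduces error terms with no a priori sign, most problematically $u^{p(v)}(\log u)\,p'(v)\partial_t v$, where $\partial_t v$ contains $u$ itself and threatens to close a vicious loop. Lemma \ref{Lmma-1} is the delicate device that keeps this bookkeeping tractable, and the dimensional restriction $N\leq 4$ is likely to enter here through the Sobolev embeddings needed to subsume residual lower-order terms into the principal dissipation.
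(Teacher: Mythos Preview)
Your overall scaffolding---local existence, a weighted moment $\int_\Omega u^p v^{-q}$, extraction of a plain $L^{p_0}$ bound with $p_0>N/2$, semigroup smoothing for $v$, $L^\infty$ bound on $v$, then Moser iteration for $u$---matches the paper. But two of the central steps are misread.

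\medskip
\textbf{The role of Lemma~\ref{Lmma-1}.} Despite the suggestive wording in the abstract and introduction, the paper does \emph{not} take $p$ (or $q$) to be a function of $v(x,t)$. Both exponents are fixed constants, so no terms of the form $u^{p(v)}(\log u)\,p'(v)\,\partial_t v$ ever appear, and Lemma~\ref{Lmma-1} is not a device for absorbing them. That lemma is purely algebraic: once Young's inequality is applied to the cross term in $\frac{d}{dt}\int_\Omega u^{p}v^{-q}$ (computed in Lemma~\ref{lemma3-1} for constant exponents), the sign of the $|\nabla v|^2$ coefficient reduces to the sign of the quadratic $A(v)q^2-B(v)q+C(v)$ in the \emph{constant} $q$, with $v$-dependent coefficients $A,B,C$ built from $\gamma(v)$, $v\phi(v)$, $d$, $p$. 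Lemma~\ref{Lmma-1} shows that under $(H_3)$, for $p>N/2$ close to $N/2$, the interval $\big(\tfrac{2C(v)}{B(v)},\tfrac{B(v)}{2A(v)}\big]\cap\big(0,\min\{\tfrac{N}{2},p\}\big)$ is nonempty for every $v\ge 0$. The ``$v$-dependence'' in the paper's description refers only to the fact that this admissible window for $q$ moves with $v$; the exponent itself is a number. The obstacle you single out as the main one therefore does not arise in the paper's argument.

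\medskip
\textbf{From the weighted bound to $\|u\|_{L^{p_0}}$.} Your step ``combined with $v\ge v_\star$, this produces $\|u\|_{L^{p_0}}\le C$'' is the wrong way round: $v\ge v_\star$ gives $v^{-q}\le v_\star^{-q}$, which does not bound $\int_\Omega u^p$ from above in terms of $\int_\Omega u^p v^{-q}$. One would need an \emph{upper} bound on $v$, which is precisely what has not yet been obtained. The paper closes this gap by a feedback argument (Lemma~\ref{lem8-01}): H\"older gives
\[
\|u\|_{L^{l_0}}\le\Big(\int_\Omega u^{\kappa}v^{-q_0}\Big)^{1/\kappa}\|v\|_{L^{l_0 q_0/(\kappa-l_0)}}^{\,q_0/\kappa},
\]
and the last factor is controlled back by $\|u\|_{L^{l_0}}$ via $L^p$--$L^q$ heat-semigroup estimates; since $q_0/\kappa<1$ the loop closes. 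This is also where the restriction $N\le 4$ actually enters---through the bound on the residual source term $\int_\Omega v^{p-q}$ in the Gronwall step of Lemma~\ref{lemma-011}, using only the $L^{q_0}$ control on $v$ with $q_0<\tfrac{N}{N-2}$ coming from mass conservation---and not through the coercivity analysis of the quadratic form.
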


\begin{remark} There are two remarks concerning the above results.

\begin{enumerate}
\item If $d=\gamma(v)=1$ and $\phi(v)=\frac{\chi}{v}$, then the hypothesis $(H_3)$    is equivalent to  

$$\inf_{v\geq0}\frac{d\gamma(v)}{v\phi(v)(v\phi(v)+d-\gamma(v))_+}=\frac{1}{\chi^2}>\frac{N}{2}.
$$
Thus the results in Theorem \ref{theorem3} says that the system \eqref{eq-1} with $\tau=1$ admit globally bounded solutions if $\chi<\sqrt{\frac{2}{N}}$, which uncovers the result of Fujie (\cite{Fujie-5}).
\item In \cite{Wang-2}, two essential conditions (L3) and \eqref{extra} are imposed to guarantee the global boundedness of solutions to \eqref{eq-1} with $\tau=0$, as recalled above, where the condition \eqref{extra} imposed a sole restriction on $\phi(v)$.   In this paper, we do not have such a condition and only hypothesis $(H_3)$ is prescribed.
\end{enumerate}
\end{remark}
If $\gamma(v)$ and $\phi(v)$ are explicitly given, the condition   $(H_3)$  can be
specified. In the following theorem, as an application of Theorem \ref{theorem3}, we consider algebraically decreasing  motility
functions $\gamma(v)$ and $\phi(v)$ connected by the proportionality relation \eqref{relation} in the original Keller-Segel model and specify the conditions warranting the global boundedness.

Before stating the result, we recall the following result (see \cite[Lemma 2.2]{Fujie-5}) asserting that  the solution component $v$ has a positive lower bound.
\begin{lemma}\label{lemm4-02}
Let $(u,v)$ be a solution to \dref{eq-1}.
Then there exists a positive constant $\eta$ independent of $t$
such that
\begin{equation}\label{eq5-01}
\inf\limits_{x \in \Omega}v(x,t)\geq \eta~~\mbox{for all}~~t>0.
\end{equation}
\end{lemma}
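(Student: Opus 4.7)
The natural route is to combine two ingredients: the $L^1$-conservation of $u$, which follows by integrating the first equation of \eqref{eq-1} under the no-flux boundary condition to give $\int_\Omega u(\cdot,t)\,dx = m := \int_\Omega u_0\,dx > 0$ for all $t \geq 0$; and the Duhamel representation of $v$ obtained by rewriting its equation as $v_t - d\Delta v + v = u$ and applying variation of constants with respect to the Neumann heat semigroup $e^{t d\Delta_N}$:
\begin{equation*}
v(x,t) = e^{-t}\bigl(e^{t d\Delta_N}v_0\bigr)(x) + \int_0^t e^{-(t-s)}\bigl(e^{(t-s) d\Delta_N}u(\cdot,s)\bigr)(x)\,ds,
\end{equation*}
in which both summands are pointwise nonnegative because $v_0 \geq 0$ and $u \geq 0$.

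The engine of the argument is a pointwise lower bound for the Neumann heat kernel $G(x,y,t)$. Since the lowest Neumann eigenvalue is $\lambda_0 = 0$ with constant eigenfunction and the next one satisfies $\lambda_1 > 0$, the spectral expansion forces $G(x,y,t) \to 1/|\Omega|$ uniformly on $\bar\Omega \times \bar\Omega$ at exponential rate $e^{-d\lambda_1 t}$. Hence there exists a time scale $t_0 > 0$, depending only on $d$ and $\Omega$, such that $G(x,y,t) \geq 1/(2|\Omega|)$ for all $t \geq t_0$ and all $x,y \in \bar\Omega$. Inserting this into the Duhamel integral, restricting to $s \in [t-2t_0, t-t_0]$, and invoking mass conservation of $u$ yields, for every $t \geq 2t_0$,
\begin{equation*}
v(x,t) \geq \int_{t-2t_0}^{t-t_0} e^{-(t-s)} \frac{1}{2|\Omega|} \int_\Omega u(y,s)\,dy\,ds \geq \frac{m\, t_0\, e^{-2t_0}}{2|\Omega|} =: \eta_1 > 0.
\end{equation*}

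For the transient range $t \in (0, 2t_0]$, the strong parabolic maximum principle applied to $v_t - d\Delta v + v = u \geq 0$ with $v_0 \not\equiv 0$ gives $v(\cdot,t) > 0$ pointwise on $\bar\Omega$ for every $t > 0$, while continuity of $v$ on compact subsets of $\bar\Omega \times (0,\infty)$ together with the strictly positive contribution of the Duhamel source integral (the first equation is uniformly parabolic on short times since $\gamma(v)$ stays close to $\gamma(v_0) > 0$ by continuity, so $u$ itself is strictly positive on $\bar\Omega$ for any $t > 0$) yields a positive lower bound $\eta_2$ on the slab $\bar\Omega \times (0, 2t_0]$. Taking $\eta := \min(\eta_1,\eta_2)$ concludes the proof.

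The only real subtlety is the short-time regime when $v_0$ may vanish on part of $\Omega$: the spectral-gap lower bound on $G$ becomes ineffective as $t\downarrow 0$, so one must transfer positivity from $u$, which carries a conserved positive mass, to $v$ in a manner that remains uniform down to small times; the long-time bound itself is a routine consequence of the Neumann spectral gap and $L^1$-conservation.
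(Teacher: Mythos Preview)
The paper does not prove this lemma; it is quoted from \cite[Lemma~2.2]{Fujie-5}, and your argument---Duhamel representation, the uniform Neumann heat-kernel lower bound $G(x,y,t)\geq\frac{1}{2|\Omega|}$ for $t\geq t_0$ coming from the spectral gap, and the conserved mass $\int_\Omega u=m>0$---is exactly the mechanism Fujie uses for the long-time regime. That part is correct.

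The short-time treatment, however, contains a gap that you flag but do not close, and in fact it cannot be closed under the hypotheses stated in this paper. If $v_0(x_0)=0$ for some $x_0\in\bar\Omega$, then by the continuity $v\in C^0(\bar\Omega\times[0,T_{\max}))$ asserted in Lemma~\ref{lemma70} one has $v(x_0,t)\to 0$ as $t\downarrow 0$, so no uniform lower bound on $(0,2t_0]$ exists. Your two proposed fixes both fail: compactness on $\bar\Omega\times[\varepsilon,2t_0]$ does not survive $\varepsilon\to 0$, and the Duhamel source term $\int_0^t e^{-(t-s)}e^{(t-s)d\Delta_N}u(\cdot,s)\,ds$ vanishes with $t$ regardless of how positive $u$ becomes. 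In Fujie's original setting the sensitivity is $\chi/v$, which is singular at $v=0$, so his initial data satisfy $v_0>0$ on $\bar\Omega$; under that stronger assumption the compactness argument becomes legitimate on the closed slab $\bar\Omega\times[0,2t_0]$ and the proof goes through. The present paper's hypothesis \eqref{eq-6} allows $v_0\geq 0$ with zeros, which makes the lemma literally false for small $t$; this is an imprecision in the paper rather than in your method, and since the lemma is invoked only in Theorem~\ref{theorem4} where $\gamma(v)=\sigma v^{-\lambda}$, the assumption $v_0>0$ is implicitly needed there anyway.
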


Then the following results hold.

\begin{theorem}\label{theorem4}
Let $\Omega\subset\mathbb{R}^N(2\leq N\leq 4)$ be a bounded  domain with smooth boundary and initial data satisfy \dref{eq-6}.
 If $\gamma(v)$ and $\phi(v)$ satisfy the relation \eqref{relation} with $0<\alpha <1$
 and
$\gamma(v) =\frac{\sigma}{v^\lambda}$ ($\sigma > 0,\lambda>0$)
 such that
\begin{equation}
\frac{N}{2}<
\begin{cases}
\displaystyle \frac{d}{\lambda(1-\alpha)\Big(\frac{[\lambda(1-\alpha)-1]\sigma}{\eta^\lambda}+d\Big)_+}, & if \ \ \lambda>\frac{1}{1-\alpha}\\
\displaystyle \frac{1}{\lambda(1-\alpha)}, & if \ \  \lambda\leq\frac{1}{1-\alpha}
\end{cases}
\end{equation}
where $\eta$ is defined in Lemma \ref{lemm4-02}, then the  system \dref{eq-1} has a unique classical solution $(u,v)$ in $\bar{\Omega}\times[0,\infty)$, which is uniformly bounded in time such that
 $$
\|u(\cdot, t)\|_{L^\infty(\Omega)}+\| v(\cdot, t)\|_{W^{1,\infty}(\Omega)}\leq C~~ \mbox{for all}~~ t\in(0,\infty)
$$
with some positive constant $C$ independent of $t$.
\end{theorem}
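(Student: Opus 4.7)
The plan is to verify that the explicit motility pair
$$\gamma(v)=\frac{\sigma}{v^\lambda},\qquad \phi(v)=(\alpha-1)\gamma'(v)=\frac{(1-\alpha)\lambda\sigma}{v^{\lambda+1}}$$
satisfies hypotheses $(H_1)$--$(H_3)$ of Theorem \ref{theorem3} and then to invoke that theorem. Since $0<\alpha<1$, we have $\phi\geq 0$ on $(0,\infty)$, and both $\gamma,\phi$ are smooth there. The only obstruction to applying Theorem \ref{theorem3} verbatim is the singularity at $v=0$, and this is precisely where Lemma \ref{lemm4-02} enters: it yields $v(x,t)\geq \eta>0$ uniformly in time. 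Accordingly, I will smoothly extend $\gamma,\phi$ on $[0,\eta)$ to strictly positive $C^2$ functions, which does not alter the dynamics because the solution never visits $\{v<\eta\}$, and which reduces the verification of $(H_3)$ to the range $v\geq\eta$.

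The core calculation is that $v\phi(v)=(1-\alpha)\lambda\sigma v^{-\lambda}$, so $d\gamma(v)/[v\phi(v)]=d/[\lambda(1-\alpha)]$ is independent of $v$, and
$$v\phi(v)+d-\gamma(v)=\frac{[\lambda(1-\alpha)-1]\sigma}{v^\lambda}+d.$$
Consequently the quantity in $(H_3)$ reduces to
$$\frac{d}{\lambda(1-\alpha)\bigl([\lambda(1-\alpha)-1]\sigma v^{-\lambda}+d\bigr)_+},$$
and its infimum over $v\geq \eta$ is governed by the supremum of the parenthesized factor. A short case split on the sign of $\lambda(1-\alpha)-1$ will recover the two branches of the assumed inequality: when $\lambda>1/(1-\alpha)$, the factor is positive and strictly decreasing on $[\eta,\infty)$, attains its supremum at $v=\eta$, and produces the first displayed bound; when $\lambda\leq 1/(1-\alpha)$, the factor is non-decreasing with supremum $d$ as $v\to\infty$, producing the constant $1/[\lambda(1-\alpha)]$ of the second branch. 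The theorem's hypothesis is precisely that this infimum exceeds $N/2$, so $(H_3)$ holds and Theorem \ref{theorem3} delivers the claimed uniform $L^\infty$ bound on $u$ together with the $W^{1,\infty}$ bound on $v$.

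The main technical point I anticipate is not the computation itself but the legitimacy of the extension step: I must confirm that replacing $\gamma,\phi$ by any strictly positive $C^2$ extensions on $[0,\eta)$ leaves the classical solution unchanged. By Lemma \ref{lemm4-02} the solution trajectory never enters $\{v<\eta\}$ for positive times, so this modification is invisible to the PDE, and standard parabolic uniqueness transfers the bound from the modified system back to the original one. Everything else reduces to the routine algebra outlined above.
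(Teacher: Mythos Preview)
Your proposal is correct and follows essentially the same route as the paper: compute $v\phi(v)$ and $v\phi(v)+d-\gamma(v)$ explicitly for the given $\gamma$, split into cases on the sign of $\lambda(1-\alpha)-1$, use the lower bound $v\geq\eta$ from Lemma \ref{lemm4-02} to evaluate the infimum in $(H_3)$, and then invoke Theorem \ref{theorem3}. Your explicit treatment of the $C^2$ extension on $[0,\eta)$ to accommodate the singularity at $v=0$ is a technical point the paper leaves implicit, but the underlying argument is the same.
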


\section{Preparatory results}
In this section, we will present some preparatory results for proving the global boundedness of solutions to \eqref{eq-1}, including local existence of classical solutions, some frequently used inequalities and basic estimates on the solutions.   In what follows, without confusion, we shall abbreviate $\int_\Omega fdx$ as
$\int_\Omega f$ for simplicity. Moreover, we shall use $C_i (i = 1,2,3,\ldots)$ to denote a generic constant
which may vary in the context.

\subsection{Local existence} We first state the local-in-time existence of classical solutions of the problem \dref{eq-1}
\begin{lemma}[Local existence]\label{lemma70}
 Let $\Omega\subset\mathbb{R}^N(N\geq1)$ be a bounded domain with smooth boundary and assume $\gamma(v)$ and $\phi(v)$ satisfy the hypotheses $(H_1)$ and $(H_2)$.
If the initial data  $(u_0, v_0)$ satisfy \dref{eq-6}, then there there is a $T_{max}\in (0, \infty]$ such that the problem \dref{eq-1} with $\tau=1$ admit a unique classical solution $\disp{(u,v)\in [C^0(\bar{\Omega}\times[0,T_{max}))\cap C^{2,1}(\bar{\Omega}\times(0,T_{max}))]^2}$ satisfying $u,v>0$
  in $\Omega\times(0,T_{max})$.
%
Moreover, if  $T_{max}<+\infty$, then
$$
\|u(\cdot, t)\|_{L^\infty(\Omega)}+\| v(\cdot, t)\|_{W^{1,\infty}(\Omega)}\rightarrow\infty~~ \mbox{as}~~ t\nearrow T_{max}.
$$
\end{lemma}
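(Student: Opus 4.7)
The plan is to obtain the solution by either Amann's theory of quasilinear triangular parabolic systems or by a direct contraction-mapping argument, and then to deduce positivity and the blow-up criterion from standard parabolic tools. Viewing \eqref{eq-1} as a quasilinear system with principal part $\mathrm{diag}(\gamma(v),d)$, hypotheses $(H_1)$--$(H_2)$ together with $v_0 \in W^{1,\infty}(\Omega)$ ensure normal ellipticity in a neighborhood of the initial state, so the abstract theorem produces a maximal existence time $T_{max} \in (0, \infty]$ and a unique classical solution $(u,v)$ in the asserted regularity class.

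For concreteness I would sketch the fixed-point construction. Fix $T > 0$ small and let $\mathcal{M}_T$ denote the closed ball in $C([0,T]; W^{1,\infty}(\Omega))$ of radius $R := 1 + \|v_0\|_{W^{1,\infty}(\Omega)}$ about the constant path $t \mapsto v_0$. For $\hat v \in \mathcal{M}_T$, expanding the divergence gives $u_t = \gamma(\hat v)\Delta u + (\gamma'(\hat v) - \phi(\hat v))\nabla \hat v \cdot \nabla u - (\phi'(\hat v)|\nabla \hat v|^2 + \phi(\hat v)\Delta \hat v) u$, which by $(H_1)$ is linear and uniformly parabolic with $\gamma(\hat v) \geq \min_{[0,R]} \gamma > 0$. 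Standard linear parabolic theory then supplies a unique classical $u = u[\hat v]$ satisfying the initial datum $u_0$ and homogeneous Neumann boundary data. Substituting into $v_t = d\Delta v - v + u[\hat v]$ and solving via the Neumann heat semigroup defines $\Phi(\hat v) := v$; its $L^p$--$L^q$ smoothing estimates together with Lipschitz dependence of $u[\hat v]$ on $\hat v$ show that $\Phi$ maps $\mathcal{M}_T$ into itself and is a strict contraction for $T$ sufficiently small, so a unique fixed point furnishes the desired local classical solution.

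Strict positivity follows from the strong maximum principle: once $v$ is known to be a classical solution, the first equation is a linear parabolic equation in $u$ with smooth coefficients, and $u_0 \geq 0$ with $u_0 \not\equiv 0$ force $u > 0$ in $\Omega \times (0, T_{max})$. Since $u \geq 0$ is then a non-negative source in the $v$-equation and $v_0 \geq 0$ with $v_0 \not\equiv 0$, the comparison principle followed by the strong maximum principle delivers $v > 0$ as well. For the blow-up alternative, if $T_{max} < \infty$ and $\|u(\cdot,t)\|_{L^\infty(\Omega)} + \|v(\cdot,t)\|_{W^{1,\infty}(\Omega)}$ remained bounded up to $T_{max}$, parabolic Schauder estimates would yield uniform $C^{2+\alpha,1+\alpha/2}$ bounds, allowing the solution to be restarted beyond $T_{max}$ and contradicting maximality.

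The principal subtlety is keeping $\gamma(\hat v)$ uniformly bounded below throughout the iteration, which reduces to controlling $\sup \hat v$; this is automatic on the short time interval since $\hat v \in \mathcal{M}_T$, but the analogous issue---preventing $v$ from growing unboundedly and hence $\gamma(v)$ from degenerating---is precisely the global difficulty that the weighted-integral machinery developed later in the paper is designed to overcome.
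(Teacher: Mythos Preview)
Your proposal is correct and aligns with the paper's treatment: the paper does not give a detailed proof of this lemma at all, stating only that it ``can be proved by the standard Schauder fixed point theorem along with parabolic regularity theory (cf.~\cite{Jin-1, Zheng0}) or by the Amann's theorem~\cite{Amann-book} (cf.~\cite{Jin-2})'' and then omitting the details. Your sketch therefore supplies considerably more than the paper does, and the two routes you outline (Amann's abstract theory, or a fixed-point construction followed by maximum-principle arguments and Schauder bootstrapping for the blow-up alternative) are exactly the options the paper points to.

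One minor technical point worth tightening in your contraction argument: you expand the $u$-equation using $\Delta\hat v$, but your iteration space $C([0,T];W^{1,\infty}(\Omega))$ does not guarantee that $\Delta\hat v$ exists as a bounded function. In practice one either keeps the first equation in divergence form and works with weak solutions before bootstrapping, or iterates in a space with enough regularity on $\hat v$ (e.g.\ a H\"older or $W^{2,p}$ class obtained from the semigroup representation of the $v$-equation). This is a routine adjustment and does not affect the overall strategy.
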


The local existence results stated in Lemma \ref{lemma70} can be proved by the standard Schauder fixed point theorem along with parabolic regularity theory (cf.  see \cite{Jin-1, Zheng0}) or by the Amen's theorem \cite{Amann-book} (cf. \cite{Jin-2} for details). Therefore, we omit the proof for brevity.

\subsection{Some inequalities}

The following well-known inequality will be frequently used.
 \begin{lemma}(\cite{Nirenber-1})\label{lemma4sddd1ffgg}
Let $\Omega$
 be a bounded Lipschitz domain in $R^N$, $p, q, r, s \geq 1, j,m \in \mathbb{N}_0$ and $\alpha\in [\frac{j}{m}, 1]$
satisfying $\frac{1}{p} = \frac{j}{m} + (\frac{1}{r}-\frac{m}{N})\alpha+ \frac{1-\alpha}{q}$. Then there is a positive constant $C$ such that for all
functions $w\in W^{m,r}(\Omega)\cap L^s(\Omega)$, the following inequality holds
$$\| D^jw\|_{L^{p}(\Omega)} \leq C (\|D^mw\|_{L^{r}(\Omega)}^{\alpha}\|w\|^{1-\alpha}_{L^q(\Omega)}+\|w\|_{L^{s}(\Omega)}).$$
\end{lemma}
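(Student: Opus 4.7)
The plan is to prove this Gagliardo–Nirenberg interpolation inequality by first establishing the corresponding statement on the whole space $\mathbb{R}^N$ and then transferring to the bounded Lipschitz domain $\Omega$ through a Sobolev extension operator. The relation
$$\frac{1}{p} = \frac{j}{m} + \left(\frac{1}{r}-\frac{m}{N}\right)\alpha + \frac{1-\alpha}{q}$$
is forced by dimensional analysis: testing the homogeneous version of the inequality on the rescalings $w_\lambda(x)=w(\lambda x)$ and demanding scale invariance yields exactly this constraint on the exponents. So the right target on $\mathbb{R}^N$ is the scale-invariant bound $\|D^j w\|_{L^p(\mathbb{R}^N)} \leq C \|D^m w\|_{L^r(\mathbb{R}^N)}^{\alpha} \|w\|_{L^q(\mathbb{R}^N)}^{1-\alpha}$, and the $L^s$ term on $\Omega$ is the inhomogeneous correction paid for working on a bounded set.

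First, I would prove the full-space statement by induction on $m$, reducing it to the base case $m=1$, $j=0$. For that base case, one combines the classical Sobolev embedding $W^{1,r}(\mathbb{R}^N)\hookrightarrow L^{r^*}(\mathbb{R}^N)$ (with $1/r^*=1/r-1/N$) with the log-convexity of $L^p$ norms, i.e.\ the three-line Hölder inequality $\|w\|_{L^p}\leq \|w\|_{L^{r^*}}^{\alpha}\|w\|_{L^q}^{1-\alpha}$ whenever $1/p=\alpha/r^*+(1-\alpha)/q$; Nirenberg's original line-integration argument offers an alternative. For higher $m$ and $j\geq 1$, apply the base case to $D^{j-1}w$ and iterate, at each step using interpolation between $\|D^{j}w\|_{L^{r_k}}$ and $\|w\|_{L^q}$ to bootstrap from lower-order derivatives to higher-order ones; the exponent identity is preserved along the iteration.

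Second, I would transfer to $\Omega$. Because $\Omega$ is Lipschitz, there exists a bounded linear extension operator $E:W^{m,r}(\Omega)\to W^{m,r}(\mathbb{R}^N)$ (for instance, a Stein-type extension) satisfying simultaneously $\|Ew\|_{W^{m,r}(\mathbb{R}^N)}\leq C\|w\|_{W^{m,r}(\Omega)}$ and $\|Ew\|_{L^q(\mathbb{R}^N)}\leq C\|w\|_{L^q(\Omega)}$. Applying the $\mathbb{R}^N$ bound to $Ew$ and restricting back to $\Omega$ gives a product estimate in which, however, $\|D^m w\|_{L^r(\Omega)}$ is replaced by the full Sobolev norm $\|w\|_{W^{m,r}(\Omega)}$, since the extension controls top-order derivatives only together with lower-order terms. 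To discard those lower-order pieces one invokes a compactness/Poincaré-type splitting: subtract a suitable mean or polynomial of degree $\leq m-1$, control its norm by $\|w\|_{L^s(\Omega)}$, and rejoin. This produces precisely the inhomogeneous tail $C\|w\|_{L^s(\Omega)}$ in the stated inequality.

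The main obstacle is managing the endpoint and range restrictions. The condition $\alpha\in [j/m,1]$ together with the exponent identity must be checked to guarantee $p\geq 1$ and to avoid the forbidden endpoints (such as $r=N/m$ with $j=0$, where the target embedding fails into $L^\infty$ and must be replaced by BMO); I would handle these by a small perturbation of exponents followed by a continuity/interpolation argument. A secondary nuisance is that the admissible $s$ is essentially free (any $s\geq 1$), which means the Poincaré-type absorption step above must be flexible enough to handle any choice of $s$; this is standard but requires invoking Rellich–Kondrachov compactness of $W^{m,r}(\Omega)\hookrightarrow L^s(\Omega)$ to justify the polynomial subtraction.
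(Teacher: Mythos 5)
The paper gives no proof of this lemma at all: it is quoted verbatim from Nirenberg's 1959 paper as a known result, so there is no argument of the authors' to measure yours against. Your outline is the standard modern proof of that classical result — scale-invariant Gagliardo--Nirenberg on $\mathbb{R}^N$ via Sobolev embedding plus log-convexity of $L^p$ norms and induction on the order of derivatives, then passage to the bounded Lipschitz domain through a Stein-type extension operator, with the additive $\|w\|_{L^s(\Omega)}$ term absorbing the lower-order part of the Sobolev norm — and as a plan it is essentially sound. Three remarks. First, your own scaling computation forces the balance $\frac{1}{p}=\frac{j}{N}+\alpha\bigl(\frac{1}{r}-\frac{m}{N}\bigr)+\frac{1-\alpha}{q}$, not the relation with $\frac{j}{m}$ printed in the lemma; the latter is a misprint of the classical condition (the correct term is $j/N$, while $j/m$ only enters through the requirement $\alpha\in[j/m,1]$), so your argument proves the corrected statement rather than the one as literally written, and you should say so explicitly rather than claim the printed relation is what scaling gives. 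Second, the absorption step is glossed: after extension you obtain $\|w\|_{W^{m,r}(\Omega)}^{\alpha}\|w\|_{L^q(\Omega)}^{1-\alpha}$, and replacing $\|w\|_{W^{m,r}(\Omega)}$ by $C\bigl(\|D^m w\|_{L^r(\Omega)}+\|w\|_{L^s(\Omega)}\bigr)$ leaves a cross term $\|w\|_{L^s}^{\alpha}\|w\|_{L^q}^{1-\alpha}$ which is not yet of the stated additive form when $q>s$; it must be handled by a further interpolation of $\|w\|_{L^q}$ between $\|D^m w\|_{L^r}$ and $\|w\|_{L^s}$ (the $j=0$ case of the same inequality) or by a direct Ehrling/compactness argument with the correct normalization. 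Third, the endpoint exception of the classical theorem (for $1<r<\infty$ with $m-j-N/r$ a nonnegative integer, $\alpha=1$ must be excluded) is a genuine restriction that cannot be removed by the perturbation argument you allude to; it is harmless here since the paper only invokes the inequality with interior exponents $\alpha<1$, but a complete proof should state the exception rather than try to argue it away.
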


\bigbreak

\begin{lemma}(\cite{Lankeit-11})\label{lemma-7}
Let $y\in C^1((0, T )) \cap C^0([0, T ))$ with some $T\in(0,\infty]$,  $B > 0, A > 0$ and the nonnegative function $h \in C^0([0, T ))$ satisfy
\begin{equation}\label{eq-010}
\begin{array}{ll}
\displaystyle{
 y'(t)+Ay(t)\leq h(t)~~~\mbox{and}~~~\int_{(t-1)_+}^{t}h(s)ds\leq B ~~\mbox{for a.e.}~~t\in(0,T)}.
\end{array}
\end{equation}
Then it follows that
$$y(t)\leq y_0+\frac{B}{1-e^{-A}}~~\mbox{for all}~~t\in(0,T).$$
\end{lemma}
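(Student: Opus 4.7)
The plan is to treat $y'+Ay\le h$ as a first-order linear ODE for $y$ and then exploit the sliding unit-window control on $h$ via a geometric-series bound. First I would multiply the inequality by the integrating factor $e^{As}$, so that $\tfrac{d}{ds}\bigl(e^{As}y(s)\bigr)\le e^{As}h(s)$, and integrate from $0$ to $t\in(0,T)$. Dividing through by $e^{At}$ produces the representation-type estimate
\begin{equation*}
y(t)\le e^{-At}y(0)+\int_0^t e^{-A(t-s)}h(s)\,ds.
\end{equation*}

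The next step is to bring in the hypothesis $\int_{(s-1)_+}^{s}h\le B$ by partitioning the domain of integration into unit-length slices counted backwards from $t$. Setting $I_k=[\max(t-k-1,0),\,t-k]$ for $k\in\mathbb{N}_0$, on $I_k$ one has $e^{-A(t-s)}\le e^{-Ak}$, and hence
\begin{equation*}
\int_{I_k} e^{-A(t-s)}h(s)\,ds\le e^{-Ak}\!\int_{I_k} h(s)\,ds\le e^{-Ak}B.
\end{equation*}
Summing over $k\ge 0$ gives the convergent geometric series $B\sum_{k\ge 0}e^{-Ak}=B/(1-e^{-A})$, and combining with $e^{-At}y(0)\le y_0$ (which reflects the nonnegativity setting in which the lemma is applied) yields the desired bound $y(t)\le y_0+B/(1-e^{-A})$ uniformly in $t\in(0,T)$.

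I do not expect any genuine obstacle: the argument is entirely elementary and is essentially the continuous Gronwall lemma adapted to the uniform-in-unit-window hypothesis rather than to a pointwise bound on $h$. The only bookkeeping point is the truncated slice $I_{\lfloor t\rfloor}$ when $t$ is not an integer, which is harmless since we are majorizing the integral by an infinite geometric series and so any missing portion of the last interval only improves the estimate.
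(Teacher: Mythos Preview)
The paper does not give its own proof of this lemma; it is simply quoted from \cite{Lankeit-11}. Your argument is the standard one and is correct: the integrating-factor step yields $y(t)\le e^{-At}y_0+\int_0^t e^{-A(t-s)}h(s)\,ds$, and slicing the integral into the backward unit intervals $[(t-k-1)_+,t-k]$ bounds it by $B\sum_{k\ge 0}e^{-Ak}=B/(1-e^{-A})$.

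The one point you flag parenthetically is genuine and worth stating explicitly: the passage from $e^{-At}y_0$ to $y_0$ requires $y_0\ge 0$. Without that the conclusion is actually false as literally written (take $y(t)=-e^{-At}$, $h\equiv 0$, $B=0$: then $y(t)>-1=y_0$ for all $t>0$). In every application in this paper $y(t)=\int_\Omega u^p v^{-q}\ge 0$, so the hypothesis is met, but it is an implicit assumption of the lemma rather than a consequence of the stated hypotheses.
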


The  following estimates of $v$ depending on the bound of $u$ is useful to our later analysis.
\begin{lemma}\label{lemma-014} (\cite{Fujie-5,Winkler-5})
Let $1\leq \theta,\mu \leq\infty$. Then there is a constant $C>0$ independent of $t$ such that the solution of \eqref{eq-1} satisfies the following estimates:

(i) If $\frac{N}{2}(\frac{1}{\theta}-\frac{1}{\mu})<1,$ then
\begin{equation*}
\|v(\cdot,t)\|_{L^\mu(\Omega)}\leq C(1+\sup_{s\in(0,\infty)}\|u(\cdot,s)\|_{L^\theta(\Omega)})~~~\mbox{for all}~~~ t>0.
\label{eq3-1}
\end{equation*}

(ii) If $\frac{1}{2}+\frac{N}{2}(\frac{1}{\theta}-\frac{1}{\mu})<1,$ then 
\begin{equation*}
\|\nabla v(\cdot,t)\|_{L^\mu(\Omega)}\leq C(1+\sup_{s\in(0,\infty)}\|u(\cdot,s)\|_{L^\theta(\Omega)})~~~\mbox{for all}~~~ t>0.
\label{eq3-2}
\end{equation*}
\end{lemma}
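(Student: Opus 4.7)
The plan is to reduce both estimates to standard smoothing bounds for the Neumann heat semigroup. I would first rewrite the second equation of \eqref{eq-1} as $v_t - d\Delta v + v = u$ and express $v$ through the variation of constants formula
\[
v(\cdot, t) = e^{-t}\,e^{t d\Delta}v_0 + \int_0^t e^{-(t-s)}\,e^{(t-s)d\Delta}u(\cdot, s)\,ds,
\]
where $(e^{\sigma d\Delta})_{\sigma \geq 0}$ denotes the Neumann heat semigroup on $\Omega$. The crucial feature is that the linear absorption $-v$ produces the damping factor $e^{-(t-s)}$, which will guarantee uniform-in-time control later.

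Next, I would invoke the well-known $L^\theta$–$L^\mu$ smoothing inequalities for the Neumann heat semigroup: for $1\leq \theta \leq \mu \leq \infty$ and every $w \in L^\theta(\Omega)$,
\[
\|e^{\sigma d\Delta} w\|_{L^\mu(\Omega)} \leq C \bigl(1 + \sigma^{-\frac{N}{2}(\frac{1}{\theta}-\frac{1}{\mu})}\bigr)\|w\|_{L^\theta(\Omega)},
\]
\[
\|\nabla e^{\sigma d\Delta} w\|_{L^\mu(\Omega)} \leq C \bigl(1 + \sigma^{-\frac{1}{2}-\frac{N}{2}(\frac{1}{\theta}-\frac{1}{\mu})}\bigr)\|w\|_{L^\theta(\Omega)},
\]
with constants depending only on $\Omega$, $d$, $\theta$, and $\mu$.

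Setting $M := \sup_{s>0} \|u(\cdot, s)\|_{L^\theta(\Omega)}$ and applying these bounds inside the Duhamel integral, I obtain in case (i), with $\alpha := \frac{N}{2}(\frac{1}{\theta}-\frac{1}{\mu}) < 1$,
\[
\|v(\cdot, t)\|_{L^\mu(\Omega)} \leq C \bigl(1 + t^{-\alpha}\bigr) e^{-t} \|v_0\|_{L^\theta(\Omega)} + C M \int_0^t e^{-(t-s)} \bigl(1 + (t-s)^{-\alpha}\bigr)\,ds,
\]
and analogously in case (ii) with $\alpha := \frac{1}{2} + \frac{N}{2}(\frac{1}{\theta}-\frac{1}{\mu}) < 1$ together with the gradient smoothing bound. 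Under $\alpha < 1$, the change of variable $\sigma = t - s$ converts the remainder into $\int_0^t e^{-\sigma}(1 + \sigma^{-\alpha})\,d\sigma$, which is dominated by the convergent integral $\int_0^\infty e^{-\sigma}(1 + \sigma^{-\alpha})\,d\sigma$; hence the right-hand side is bounded by a constant independent of $t$. Since $\|v_0\|_{L^\theta(\Omega)}$ is finite by \eqref{eq-6}, both (i) and (ii) follow.

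The proof is therefore mostly mechanical once the semigroup smoothing bounds are in hand, and I do not expect a substantive analytic obstacle. The one ingredient whose verification deserves care is the gradient smoothing bound above, which for the Neumann Laplacian on a bounded domain is collected in Winkler's papers and must simply be quoted in the right form. A minor conceptual point worth flagging is that without the absorption term $-v$ one would have to split $u$ into its mean and mean-zero parts to control the long-time behavior of $e^{\sigma d\Delta}$; here that refinement is unnecessary, because the factor $e^{-(t-s)}$ alone supplies all the temporal decay needed at both $s \to t$ and $t \to \infty$.
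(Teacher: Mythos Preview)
The paper does not prove this lemma; it is quoted from Fujie \cite{Fujie-5} and Winkler \cite{Winkler-5}, and your Duhamel-plus-semigroup-smoothing argument is exactly the method used in those references (and reproduced by the authors themselves later, cf.\ the derivation of \dref{eq-808}--\dref{eq-809}). So the overall strategy is correct and standard.

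There is, however, one genuine small-time gap in your treatment of the initial-data contribution. You bound $\|e^{td\Delta}v_0\|_{L^\mu}$ (respectively its gradient) through the $L^\theta\to L^\mu$ smoothing estimate, which produces the factor $(1+t^{-\alpha})e^{-t}$; this quantity is \emph{not} uniformly bounded as $t\to 0^+$ once $\alpha>0$, so the constant in your final inequality would depend on $t$. The remedy is to use the full regularity $v_0\in W^{1,\infty}(\Omega)$ granted by \dref{eq-6} rather than routing $v_0$ through $L^\theta$: for (i) simply invoke the contraction $\|e^{td\Delta}v_0\|_{L^\mu}\le \|v_0\|_{L^\mu}$, and for (ii) use the Neumann-semigroup gradient bound $\|\nabla e^{td\Delta}v_0\|_{L^\mu}\le C\|\nabla v_0\|_{L^\infty}$ (as in \cite{Winkler-1}, and exactly as the paper does at \dref{eq-808}). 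With that adjustment the argument is complete and matches the cited proofs.
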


\subsection{Some basic estimates on solutions}
Below we shall show some basic but important preparatory estimates on solutions in order to prove our results.

\begin{lemma}\label{lemma-010}
Let $(u,v)$ be a solution to \dref{eq-1} obtained in Lemma \ref{lemma70} with a maximal time $T_{max}>0$. The it follows that
\begin{equation}\label{eq-012}
\int_{\Omega}u= \int_{\Omega}{u_{0}}~~\mbox{for all}~~ t\in(0, T_{max})
\end{equation}
and
\begin{equation}\label{123}
\int_{\Omega}{v}\leq \max\bigg\{\int_{\Omega}{u_{0}},\int_{\Omega}{v_{0}}\bigg\}~~\mbox{for all}~~ t\in(0, T_{max}).
\end{equation}
\end{lemma}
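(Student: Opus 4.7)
The plan is to integrate each of the two evolution equations in \eqref{eq-1} over $\Omega$ and exploit the Neumann boundary conditions to eliminate all boundary flux contributions, thereby reducing the problem to two simple ODE identities for the spatial integrals of $u$ and $v$.

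For the mass conservation \eqref{eq-012}, I would integrate the first equation $u_t=\nabla\cdot(\gamma(v)\nabla u-u\phi(v)\nabla v)$ over $\Omega$. By the divergence theorem, the right-hand side equals $\int_{\partial\Omega}(\gamma(v)\partial_\nu u-u\phi(v)\partial_\nu v)\,dS$, which vanishes identically due to $\partial_\nu u=\partial_\nu v=0$ on $\partial\Omega$. Hence $\frac{d}{dt}\int_\Omega u=0$ on $(0,T_{max})$, and integration in time from $0$ to $t$ yields $\int_\Omega u(\cdot,t)=\int_\Omega u_0$.

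For the bound \eqref{123}, I would integrate the second equation $v_t=d\Delta v+u-v$ over $\Omega$. Again the Laplacian term drops by the divergence theorem and the Neumann condition, so setting $y(t):=\int_\Omega v(\cdot,t)$ one obtains the scalar linear ODE
\begin{equation*}
y'(t)+y(t)=\int_\Omega u(\cdot,t)=\int_\Omega u_0,\qquad y(0)=\int_\Omega v_0,
\end{equation*}
where the right-hand side is already known to be constant in $t$ by the first part. Solving explicitly gives
\begin{equation*}
y(t)=\int_\Omega u_0+\Bigl(\int_\Omega v_0-\int_\Omega u_0\Bigr)e^{-t}.
\end{equation*}
If $\int_\Omega v_0\leq\int_\Omega u_0$, the exponential term is nonpositive and $y(t)\leq\int_\Omega u_0$; if $\int_\Omega v_0>\int_\Omega u_0$, then $y(t)$ is a convex combination of $\int_\Omega u_0$ and $\int_\Omega v_0$ (the weights being $1-e^{-t}$ and $e^{-t}$), so $y(t)\leq\int_\Omega v_0$. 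In either case $y(t)\leq\max\{\int_\Omega u_0,\int_\Omega v_0\}$, which is \eqref{123}.

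This argument is entirely routine; there is no real obstacle, since the nonlinear cross-diffusion structure contributes only through a boundary term that is annihilated by the no-flux condition. The only mild subtlety is that the positivity of $u,v$ on $(0,T_{max})$ (guaranteed by Lemma~\ref{lemma70}) is tacitly used to assert that $\int_\Omega u_0>0$ and to interpret both sides as nonnegative quantities, but it is not needed in the computation itself.
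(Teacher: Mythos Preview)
Your proof is correct and follows precisely the approach indicated in the paper: integrate each equation over $\Omega$, kill the divergence terms via the Neumann boundary conditions, and then use the resulting ODE for $\int_\Omega v$ together with the already-established mass conservation for $u$. The paper merely sketches this in one sentence, whereas you have written out the explicit solution of the linear ODE and the two-case comparison, but the underlying argument is identical.
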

\begin{proof}
Integration of the first equation of \eqref{eq-1} along with the Neumann boundary conditions immediately gives \eqref{eq-012} and integration of the second equation of \eqref{eq-1} with \eqref{eq-012} yields \eqref{123}.
\end{proof}

The following lemma will serve as a source for several integral estimates in the sequel. It will be applied with certain parameters $p > \frac{N}{2}$ to provide global smooth solutions for system \dref{eq-1}. We shall prepare some useful estimates. Firstly,  we recall and  build some uniform estimate for the weighted integral
$\int_\Omega u^p v^{-q} dx$ with some $p>0,q > 0$, which may depend on $v$.
\begin{lemma}\label{lemma3-1} Let $\Omega\subset\mathbb{R}^N(N\geq1)$ be a smooth bounded  domain.
    Let $(u,v)$ be a solution to \dref{eq-1} obtained in Lemma \ref{lemma70} with a maximal time $T_{max}>0$.  Then
for all $\tilde{p}, \tilde{q} \in \mathbb{R}$, 
we have
\begin{equation}
\begin{array}{rl}
&\disp{\frac{d}{dt}\int_{\Omega}u^{\tilde{p}}v^{\tilde{q}}}
\\
=&\disp{-\tilde{p}(\tilde{p}-1)\int_{\Omega}u^{\tilde{p}-2}v^{\tilde{q}}\gamma(v)|\nabla u|^2+\int_{\Omega} u^{\tilde{p}}v^{\tilde{q}-2}[\tilde{p}\tilde{q}v\phi(v)-d\tilde{q}(\tilde{q}-1)]|\nabla v|^2}\\
&\disp{+\int_{\Omega}\tilde{p}u^{\tilde{p}-1}v^{\tilde{p}-1}[(\tilde{p}-1)v\phi(v)-\tilde{q}\gamma(v)-d\tilde{q}]\nabla u\cdot\nabla v}
\\
&+\disp{\tilde{q}\int_{\Omega}u^{\tilde{p}}v^{\tilde{q}-1}(u-v)~~ \mbox{for all}~~  t\in(0,T_{max}).}
\\
\end{array}
\label{eq4-1}
\end{equation}
\end{lemma}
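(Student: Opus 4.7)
\medskip

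\noindent\textbf{Proof plan.} The identity \eqref{eq4-1} is a purely formal computation: differentiate under the integral, substitute the two evolution equations from \eqref{eq-1}, and integrate by parts once using the Neumann boundary conditions. The plan is therefore mostly bookkeeping, and the hard part is only to keep the six generated terms lined up in the right way.

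First I would note that, by Lemma \ref{lemma70}, on $(0,T_{\max})$ the solution is classical and strictly positive, so $u^{\tilde p}v^{\tilde q}$ is smooth in $t$ and the derivative passes under the integral, giving
$$
\frac{d}{dt}\int_\Omega u^{\tilde p}v^{\tilde q}
= \tilde p \int_\Omega u^{\tilde p-1}v^{\tilde q}\,u_t
+ \tilde q \int_\Omega u^{\tilde p}v^{\tilde q-1}\,v_t.
$$
Next I would insert $u_t=\nabla\!\cdot(\gamma(v)\nabla u-u\phi(v)\nabla v)$ into the first integral and $v_t=d\Delta v+u-v$ into the second, and then integrate by parts once in each. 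Since $\partial_\nu u=\partial_\nu v=0$ on $\partial\Omega$, all boundary contributions vanish, so only the bulk terms
$$
-\tilde p\int_\Omega \nabla(u^{\tilde p-1}v^{\tilde q})\cdot\bigl(\gamma(v)\nabla u-u\phi(v)\nabla v\bigr)
\;-\;d\tilde q\int_\Omega \nabla(u^{\tilde p}v^{\tilde q-1})\cdot\nabla v
\;+\;\tilde q\int_\Omega u^{\tilde p}v^{\tilde q-1}(u-v)
$$
remain.

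The last step is the expansion via the product rule
$$
\nabla(u^{\tilde p-1}v^{\tilde q})=(\tilde p-1)u^{\tilde p-2}v^{\tilde q}\nabla u+\tilde q u^{\tilde p-1}v^{\tilde q-1}\nabla v,\qquad
\nabla(u^{\tilde p}v^{\tilde q-1})=\tilde p u^{\tilde p-1}v^{\tilde q-1}\nabla u+(\tilde q-1)u^{\tilde p}v^{\tilde q-2}\nabla v,
$$
after which I would collect the coefficients of $|\nabla u|^2$, of $|\nabla v|^2$, and of $\nabla u\cdot\nabla v$. The $|\nabla u|^2$ coefficient comes only from the first flux, producing $-\tilde p(\tilde p-1)u^{\tilde p-2}v^{\tilde q}\gamma(v)$. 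The $|\nabla v|^2$ coefficient picks up $\tilde p\tilde q u^{\tilde p}v^{\tilde q-1}\phi(v)$ from the chemotactic flux and $-d\tilde q(\tilde q-1)u^{\tilde p}v^{\tilde q-2}$ from the Laplacian integration by parts, which combine into $u^{\tilde p}v^{\tilde q-2}[\tilde p\tilde q v\phi(v)-d\tilde q(\tilde q-1)]$. The mixed $\nabla u\cdot\nabla v$ coefficient collects three contributions, namely $\tilde p(\tilde p-1)v\phi(v)$, $-\tilde p\tilde q\gamma(v)$, and $-d\tilde p\tilde q$, which factor exactly as $\tilde p u^{\tilde p-1}v^{\tilde q-1}[(\tilde p-1)v\phi(v)-\tilde q\gamma(v)-d\tilde q]$. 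Together with the surviving zeroth-order piece $\tilde q\int_\Omega u^{\tilde p}v^{\tilde q-1}(u-v)$, this reproduces \eqref{eq4-1}.

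I do not foresee a real obstacle. The only subtlety worth checking is that the integrands with possibly negative powers of $u$ or $v$ are integrable: since the solution is classical on $(0,T_{\max})$ and pointwise positive (and, when eventually combined with Lemma \ref{lemm4-02}, bounded below by $\eta>0$ in $v$), every integral above converges and all integrations by parts are legitimate.
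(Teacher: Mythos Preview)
Your proof is correct and follows essentially the same route as the paper's own argument: differentiate under the integral, substitute the two equations of \eqref{eq-1}, integrate by parts once using the Neumann conditions, expand $\nabla(u^{\tilde p-1}v^{\tilde q})$ and $\nabla(u^{\tilde p}v^{\tilde q-1})$ via the product rule, and regroup into $|\nabla u|^2$, $|\nabla v|^2$, $\nabla u\cdot\nabla v$, and zeroth-order pieces. Your added remark on integrability of negative powers (via classical positivity from Lemma~\ref{lemma70}) is a reasonable extra care that the paper itself leaves implicit.
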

\begin{proof}
A direct calculation resting on integration by parts and the Neumann boundary
conditions yields
\begin{eqnarray}\label{I}
\begin{aligned}
&\disp{\frac{d}{dt}\int_{\Omega}u^{\tilde{p}}v^{\tilde{q}}}
\\
=&\disp{\tilde{p}\int_{\Omega}u^{\tilde{p}-1}v^{\tilde{q}}u_t+\tilde{q}\int_{\Omega}u^{\tilde{p}}v^{\tilde{q}-1}v_t}
\\
=&\disp{\tilde{p}\int_{\Omega}u^{\tilde{p}-1}v^{\tilde{q}}[\nabla\cdot(\gamma(v)\nabla u-u\phi(v)\nabla v)]+\tilde{q}\int_{\Omega}u^{\tilde{p}}v^{\tilde{q}-1}(d\Delta v+u-v)}
\\
=&\disp{-\tilde{p}\int_{\Omega}[\gamma(v)\nabla u-u\phi(v)\nabla v)]\cdot\nabla(u^{\tilde{p}-1}v^{\tilde{q}})-\tilde{q}d\int_{\Omega}\nabla (u^{\tilde{p}}v^{\tilde{q}-1})\cdot\nabla v}+\disp{\tilde{q}\int_{\Omega}u^{\tilde{p}}v^{\tilde{q}-1}(u-v)}\\
=&I_1+I_2+I_3.
\end{aligned}
\end{eqnarray}
With further simple calculations, we have
\begin{eqnarray}\label{I1}
\begin{aligned}
I_1=&\disp{-\tilde{p}(\tilde{p}-1)\int_{\Omega}u^{\tilde{p}-2}v^{\tilde{q}}[\gamma(v)\nabla u-u\phi(v)\nabla v)]\cdot\nabla u} \\
&\disp{-\tilde{p}\tilde{q}\int_{\Omega}u^{\tilde{p}-1}v^{\tilde{q}-1}[\gamma(v)\nabla u-u\phi(v)\nabla v)]\cdot\nabla v}
\end{aligned}
\end{eqnarray}
and
\begin{eqnarray}\label{I2}
\begin{aligned}
I_2=\disp{-d\tilde{q}\tilde{p}\int_{\Omega}u^{\tilde{p}-1}v^{\tilde{q}-1}\nabla u\cdot\nabla v-d\tilde{q}(\tilde{q}-1)\int_{\Omega} u^{\tilde{p}}v^{\tilde{q}-2}|\nabla v|^2}.
\end{aligned}
\end{eqnarray}
Substituting \eqref{I1}-\eqref{I2} into \eqref{I} and regrouping the same-like terms, we get \eqref{eq4-1} immediately.
\end{proof}

In  order to cope with the barrier caused by the  non-constant $\gamma(v)$ and $\phi(v)$, we prove the following lemma which plays a key role in obtaining  the $L^p$-estimate for $u$ for some $p >\frac{N}{2}$. In the sequel, for notational brevity, we set
\begin{equation}\bar{\phi}(v)=v\phi(v).
\label{eq-10}
\end{equation}

\begin{lemma}\label{Lmma-1}
Let $\gamma$ and $\phi$ satisfy hypotheses $(H_1)$-$(H_3)$. Let $p>\frac{N}{2} (N\geq 2)$ and close to $\frac{N}{2}$. Set
\begin{equation}\label{ABC}
\begin{cases}
A(v)
=4\gamma(v)d+p\gamma^2(v)+pd^2-2dp\gamma(v),\\
B(v)=2(p-1)[2\gamma(v)d+p\bar{\phi}(v)(\gamma(v)-d)],\\
C(v)=p(p-1)^2\bar{\phi}^2(v).
\end{cases}
\end{equation}
 Then $A(v), B(v)$ and $C(v)$ are all positive such that
 \begin{equation}\begin{array}{rl}
\bigg(\disp\frac{2C(v)}{B(v)},\disp\frac{B(v)}{2A(v)}\bigg]\cap\bigg(0,\min\Big\{\frac{N}{2},p\Big\}\bigg)
\neq&\emptyset~~ \mbox{for all}~~  v\geq0.\\
\end{array}\label{eq5-07}
\end{equation}
\end{lemma}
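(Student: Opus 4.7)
The strategy is to reduce the non-emptiness of the intersection to two algebraic inequalities in $A,B,C$, and then verify each by a case analysis on the signs of $\gamma(v)-d$ and $\bar{\phi}(v)+d-\gamma(v)$, invoking $(H_{3})$ when the relevant denominator is positive and relying on the smallness of $p-\tfrac{N}{2}$ otherwise. Positivity of $A$ and $C$ is immediate: writing $A(v)=p(\gamma-d)^{2}+4\gamma d$ (complete the square) gives $A>0$, and $C\geq 0$. For $B=2(p-1)[2\gamma d+p\bar{\phi}(\gamma-d)]$ the only nontrivial case is $\gamma<d$ with $\bar{\phi}>0$; there $\bar{\phi}(d-\gamma)<\bar{\phi}(\bar{\phi}+d-\gamma)$ together with $(H_{3})$ gives $\tfrac{2\gamma d}{\bar{\phi}(d-\gamma)}>N>p$ for $p$ close to $N/2$ (using $N\geq 2$), whence $B>0$.

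Since $2C/B\geq 0$ and $\min\{N/2,p\}=N/2$, the intersection $(2C/B,B/(2A)]\cap(0,N/2)$ is non-empty if and only if
\begin{equation*}
4AC<B^{2}\quad\text{(i)}\qquad\text{and}\qquad 4C<NB\quad\text{(ii)}.
\end{equation*}
Direct expansion yields the factorization
\begin{equation*}
B^{2}-4AC=16(p-1)^{2}\gamma d\,\bigl[\gamma d-p\bar{\phi}(\bar{\phi}+d-\gamma)\bigr],
\end{equation*}
whose bracket is obviously positive when $\bar{\phi}(\bar{\phi}+d-\gamma)\leq 0$; otherwise $(H_{3})$ (with strict infimum $M>N/2$) gives $\gamma d\geq M\bar{\phi}(\bar{\phi}+d-\gamma)$, so any $p\in(N/2,M)$ makes (i) hold uniformly in $v$.

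Inequality (ii) rewrites as $p\bar{\phi}[2(p-1)\bar{\phi}+N(d-\gamma)]<2N\gamma d$, and I would split it into three subcases. When $\gamma\leq d$, or when $\gamma>d$ with $\bar{\phi}>\gamma-d$, $(H_{3})$ yields $2N\gamma d>2Np\bar{\phi}^{2}+2Np\bar{\phi}(d-\gamma)$, and it then suffices after elementary rearrangement that $p\bar{\phi}[2(N-p+1)\bar{\phi}+N(d-\gamma)]\geq 0$: in the first subcase both summands of the bracket are nonnegative for $p<N+1$, while in the second $\bar{\phi}>\gamma-d>0$ combined with $2(N-p+1)\geq N$ (i.e.\ $p\leq N/2+1$) forces the bracket to be positive. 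The remaining subcase $\gamma>d$ with $\bar{\phi}\leq\gamma-d$, where $(H_{3})$ is vacuous, is the main obstacle; here I estimate the bracket directly via
\begin{equation*}
2(p-1)\bar{\phi}+N(d-\gamma)\leq(\gamma-d)\bigl[2(p-1)-N\bigr]\leq 0
\end{equation*}
once $p\leq N/2+1$, making the left-hand side of (ii) non-positive. Choosing $p\in(N/2,\min\{M,N/2+1\})$ satisfies all constraints simultaneously and closes the argument.
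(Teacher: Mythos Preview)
Your proof is correct and reaches the same conclusion as the paper, but the organization differs. The paper introduces four auxiliary threshold functions $\gamma_1,\gamma_2,\gamma_3,\gamma_4$ of $v$ and shows the single chain $\gamma(v)>\gamma_1(v)\geq\max\{\gamma_2,\gamma_3,\gamma_4\}$, where $\gamma>\gamma_1$ encodes $(H_3)$ (with $p$ in place of $N/2$), $\gamma>\gamma_2$ is equivalent to $B>0$, $\gamma>\gamma_1$ to $B^2>4AC$, and $\gamma>\gamma_3$ to $2C/B<N/2$; the ordering $\gamma_1\geq\gamma_3$ is established by direct subtraction, yielding a numerator $p\bar\phi^2(N-2p+2)+\bar\phi d(2N-2p+2)+Nd^2$ which is positive once $p\leq N/2+1$. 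You instead bypass the $\gamma_i$'s and verify the two reduced inequalities $4AC<B^2$ and $4C<NB$ by explicit case analysis on the signs of $\gamma-d$ and $\bar\phi+d-\gamma$, invoking $(H_3)$ only where the positive part is nonzero. Both routes hinge on the same factorization $B^2-4AC=16(p-1)^2\gamma d\,[\gamma d-p\bar\phi(\bar\phi+d-\gamma)]$ and the same smallness requirement $p\leq N/2+1$; your version is arguably more direct and also discards the paper's redundant check $2C/B<p$ (their $\gamma_4$), which is automatic since $p>N/2$.
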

\begin{proof}
First it is easy to verify that $A(v)$ and $C(v)$ are all positive.
In view of  $(H_1)$ and
$(H_3)$, one can pick $p>\frac{N}{2} (N\geq 2)$ which is close to $\frac{N}{2}$ such that for any  $v\geq0$,
\begin{equation}\frac{d\gamma(v)}{\bar{\phi}(v)(\bar{\phi}(v)+d-\gamma(v))_+}>p.
\label{eq5-09}
\end{equation}
Then it is clear that
\begin{equation}
\begin{array}{rl}
&\disp
\disp\gamma(v)>\frac{p\bar{\phi}(v)[d+\bar{\phi}(v)]}{d+p\bar{\phi}(v)}=:\gamma_1(v)~~~\mbox{for all}~~v\geq0.\\
\end{array}
\label{eq5-10}
\end{equation}
To proceed, we define
\begin{equation}
\begin{cases}
\gamma_2(v)=\disp\frac{p\bar{\phi}(v)d}{2d+p\bar{\phi}(v)},\\
\gamma_3(v)=\disp\frac{p\bar{\phi}(v)[2(p-1)\bar{\phi}(v)+dN]}{N(2d+p\bar{\phi}(v))},\\
\gamma_4(v)=\disp\frac{\bar{\phi}(v)[(p-1)\bar{\phi}(v)+pd]}{2d+p\bar{\phi}(v)}.
\end{cases}
\end{equation}
Then choosing  $p>\frac{N}{2}$ but close to $\frac{N}{2}$ and noticing that $\bar{\phi}(v)\geq0$, we can verify that
\begin{equation}\begin{array}{rl}
\gamma_1(v)-\gamma_4(v)=&\disp{\frac{p\bar{\phi}(v)[d+\bar{\phi}(v)]}{d+p\bar{\phi}(v)}-\frac{\bar{\phi}(v)[(p-1)\bar{\phi}(v)+pd]}{2d+p\bar{\phi}(v)}}\\
>&\disp{\frac{\bar{\phi}(v)[p\bar{\phi}(v)+pd]}{2d+p\bar{\phi}(v)}-\frac{\bar{\phi}(v)[(p-1)\bar{\phi}(v)+pd]}{2d+p\bar{\phi}(v)}}\\
\geq&\disp{\frac{\bar{\phi}(v)[(p-1)\bar{\phi}(v)+pd]}{2d+p\bar{\phi}(v)}-\frac{\bar{\phi}(v)[(p-1)\bar{\phi}(v)+pd]}{2d+p\bar{\phi}(v)}}=0\\
\end{array}
\label{eq5-15}
\end{equation}
as well as
\begin{equation}\begin{array}{rl}
\gamma_3(v)-\gamma_2(v)=&\disp{\frac{p\bar{\phi}(v)[2(p-1)\bar{\phi}(v)+dN]}{N(2d+p\bar{\phi}(v))}-\frac{p\bar{\phi}(v)d}{2d+p\bar{\phi}(v)}}\\
=&\disp{p\bar{\phi}(v)
\frac{2(p-1)\bar{\phi}(v)}{N(2d+p\bar{\phi}(v))}}\geq0\\
\end{array}
\label{eq5-16}
\end{equation}
and
\begin{equation}\begin{array}{rl}
\gamma_1(v)-\gamma_3(v)=&\disp{\frac{p\bar{\phi}(v)[d+\bar{\phi}(v)]}{d+p\bar{\phi}(v)}-\frac{p\bar{\phi}(v)[2(p-1)\bar{\phi}(v)+dN]}{N(2d+p\bar{\phi}(v))}}\\
=&\disp{p\bar{\phi}(v)
\frac{[p\bar{\phi}^2(v)(N-2p+2)+\bar{\phi}(v)d(2N-2p+2)+Nd^2]}{N[d+p\bar{\phi}(v)][2d+p\bar{\phi}(v)]}}>0.\\
\end{array}
\label{eq5-17}
\end{equation}
Collecting \dref{eq5-15}--\dref{eq5-17} and using \dref{eq5-10},  one gets
\begin{equation}\label{gamma}
\gamma(v)>\gamma_1(v)\geq \max\{\gamma_2(v), \gamma_3(v), \gamma_4(v)\}.
\end{equation}
Since
\begin{equation}
\begin{array}{rl}
&B(v)=2(p-1)[2\gamma(v)d+p\bar{\phi}(v)(\gamma(v)-d)]>0~\Leftrightarrow~\disp\gamma(v)>\gamma_2(v),
\end{array}
\label{eq5-12}
\end{equation}
therefore $B(v)>0$ is ensured due to \eqref{gamma}. With simple calculations, one has
$$
\begin{array}{rl}
&B^2(v)-4A(v)C(v)\\
=&4(p-1)^2\{[2\gamma(v)d+p\bar{\phi}(v)(\gamma(v)-d)]^2-p\bar{\phi}^2(v)[4\gamma(v)d+p(\gamma(v)-d)^2]\}\\
=&16(p-1)^2\gamma(v)d\{d\gamma(v)+p\gamma(v)\bar{\phi}(v)-dp\bar{\phi}(v)-p\bar{\phi}^2(v)\},
\end{array}
$$
and hence
\begin{equation}
\begin{array}{rl}
&\disp \frac{2C(v)}{B(v)}<\frac{B(v)}{2A(v)}~\Leftrightarrow~~B^2(v)-4A(v)C(v)>0~\Leftrightarrow~
\disp\gamma(v)>\gamma_1(v).
\end{array}
\label{eq5-11}
\end{equation}
Furthermore with $p>\frac{N}{2}$, one can check
\begin{equation}
\begin{array}{rl}
\disp\frac{2C(v)}{B(v)}:=&\disp\frac{(p-1)p\bar{\phi}^2(v)}{2\gamma(v)d+p\bar{\phi}(v)(\gamma(v)-d)}<\frac{N}{2}~~\Leftrightarrow~~
\disp\gamma(v)>\gamma_3(v)
\end{array}
\label{eq5-13}
\end{equation}
and
\begin{equation}
\begin{array}{rl}
\disp\frac{2C(v)}{B(v)}:=&\disp\frac{(p-1)p\bar{\phi}^2(v)}{2\gamma(v)d+p\bar{\phi}(v)(\gamma(v)-d)}<p~~\Leftrightarrow~~
\disp\gamma(v)>\gamma_4(v).
\end{array}
\label{eq5-14}
\end{equation}
Finally  with \dref{gamma}--\dref{eq5-14}, we get \eqref{eq5-07} and hence completes  the proof.
\end{proof}

To prove our main results, we need to derive some {\it a priori} estimates of $u$.
Making use of Lemma \ref{Lmma-1}, instead of working on the $L^p$-estimates of $u$,  we first
derive  a bound on the weighted $L^p$-estimates of $u$, namely $\int_{\Omega}u^{p}v^{-q}$, with  some
positive  exponents $p$ and $q$. To this end, we first establish a differential inequality on the weighted $L^p$-norm of $u$ as follows.

\begin{lemma}\label{lem5-03} Let $\Omega\subset\mathbb{R}^N(N\geq2)$ be a smooth bounded  domain.
Assume that  $\gamma(v) $ and
$\phi(v)$ satisfy hypotheses $(H_1)$-$(H_3)$.
 Let  $(u,v)$ be a solution to \dref{eq-1} in a maximal time interval $(0,T_{max})$. Then for any $\bar{p}>\frac{N}{2}$ which is close to $\frac{N}{2}$ and $p\in(1,\bar{p}]$ , there exists
 $q\in(\disp\frac{2C(v)}{B(v)},\disp\frac{B(v)}{2A(v)}]\cap(0,\min\{\frac{N}{2},p\})$ such that 
\begin{equation}
\begin{array}{rl}
&\disp{\frac{d}{dt}\int_{\Omega}u^{p}v^{-q}\leq q\int_{\Omega}u^{p}v^{-q}- q\int_{\Omega}u^{p+1}v^{-q-1}~~ \mbox{for all}~~  t\in(0,T_{max}),}
\\
\end{array}
\label{eq6-01}
\end{equation}
where $A(v)$ as well as  $B(v)$ and $C(v)$ are the same as  \dref{ABC}.
\end{lemma}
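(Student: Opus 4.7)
The plan is to apply Lemma \ref{lemma3-1} with the choice $(\tilde{p},\tilde{q}) = (p,-q)$, extract the two terms on the right-hand side of \eqref{eq6-01} from the source contribution, and show that the remaining gradient integral is pointwise non-positive by reading it as a quadratic form in $(\nabla u,\nabla v)$. With this substitution, the source term splits as
\[
\tilde{q}\int_\Omega u^{\tilde{p}} v^{\tilde{q}-1}(u-v) = -q\int_\Omega u^p v^{-q-1}(u-v) = q\int_\Omega u^p v^{-q} - q\int_\Omega u^{p+1}v^{-q-1},
\]
which is exactly the right-hand side of \eqref{eq6-01}; it therefore suffices to show that the three gradient integrals produced by \eqref{eq4-1} sum to a non-positive quantity.

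To that end, I would view the pointwise integrand of those gradient terms as a quadratic form in $X := u^{(p-2)/2} v^{-q/2}|\nabla u|$ and $Y := u^{p/2} v^{-q/2-1}|\nabla v|$, with coefficients $a := p(p-1)\gamma(v)$, $c := pq v\phi(v) + dq(q+1)$ and $b := p[(p-1)v\phi(v) + q\gamma(v) + dq]$. Since $\nabla u\cdot\nabla v$ may take either sign but is dominated by $XY$, the sign condition $-aX^2 - cY^2 + b\,\nabla u\cdot\nabla v \leq 0$ is equivalent to $b^2 \leq 4ac$. A direct expansion, dividing through by $p$ and collecting by powers of $q$, then produces the key identity
\[
4ac - b^2 \;=\; p\bigl[-A(v) q^2 + B(v) q - C(v)\bigr]
\]
with $A,B,C$ precisely those of \eqref{ABC}. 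Hence the required sign control is equivalent to the pointwise quadratic inequality $A(v) q^2 - B(v) q + C(v) \leq 0$ for every $v\geq 0$.

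To close the argument, I would invoke Lemma \ref{Lmma-1}. Under $(H_1)$--$(H_3)$, picking $p$ slightly above $N/2$ using the strictness in $(H_3)$, the lemma supplies a constant $q$ in the non-empty intersection $\bigl(\tfrac{2C(v)}{B(v)}, \tfrac{B(v)}{2A(v)}\bigr] \cap \bigl(0,\min\{N/2, p\}\bigr)$. Since $B(v)^2 > 4A(v)C(v)$ in this regime, an elementary Vieta calculation (writing $q_- = \tfrac{2C}{B+\sqrt{B^2-4AC}}$ and $q_+ = \tfrac{B+\sqrt{B^2-4AC}}{2A}$) shows that the two roots of $Aq^2-Bq+C = 0$ satisfy $q_- \leq \tfrac{2C(v)}{B(v)}$ and $q_+ \geq \tfrac{B(v)}{2A(v)}$, so $q \in [q_-, q_+]$ and the quadratic is non-positive, completing the chain.

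The main obstacle is the algebraic identification in the second paragraph: matching $4ac - b^2$ to $p[-A q^2 + B q - C]$ is what singles out the polynomials in \eqref{ABC} and anchors the whole argument to hypothesis $(H_3)$. A secondary delicacy is that, although $A,B,C$ depend on $v$, the identity used in the first paragraph is genuinely valid only for a constant exponent $q$; the strict inequality in $(H_3)$, with $p$ chosen just above $N/2$, furnishes a uniform margin that allows a single $q$ to be selected across all $v\in[0,\infty)$.
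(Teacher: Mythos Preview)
Your proposal is correct and follows essentially the same route as the paper: both apply Lemma~\ref{lemma3-1} with $(\tilde p,\tilde q)=(p,-q)$, isolate the source term as the right-hand side of \eqref{eq6-01}, and reduce the non-positivity of the gradient contribution to the quadratic inequality $A(v)q^2-B(v)q+C(v)\le 0$. The only cosmetic difference is that the paper bounds the cross term by Young's inequality (producing the function $g(p;q,v)$ with $4(p-1)\gamma(v)\,g=A q^2-Bq+C$) and then uses $Aq^2\le \tfrac12Bq$, $C<\tfrac12Bq$ on $(\tfrac{2C}{B},\tfrac{B}{2A}]$, whereas you phrase the same step as the discriminant condition $b^2\le 4ac$ on the quadratic form and close via Vieta---these are equivalent, and your explicit remark about needing a single constant $q$ uniformly in $v$ is a welcome clarification.
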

\begin{proof}
First,  from the proof of Lemma \ref{Lmma-1}, see \eqref{eq5-10},
 there exists a positive constant $\bar{p}>\frac{N}{2}$ which is close to  $\frac{N}{2}$ such that
\begin{equation}\disp \gamma(v)>\frac{\bar{p}\bar{\phi}(v)[d+\bar{\phi}(v)]}{d+\bar{p}\bar{\phi}(v)}
\label{eq6-02}
\end{equation}
for any $v\geq0$.
Observing that $\frac{\bar{p}\bar{\phi}(v)[d+\bar{\phi}(v)]}{d+\bar{p}\bar{\phi}(v)}$  is monotonically increasing  with respect
to $\bar{p}>0$.
Thus, by \dref{eq6-02}, we conclude that for all $p\in(1,\bar{p}]$,
\begin{equation*}
\disp \gamma(v)>\frac{p\bar{\phi}(v)[d+\bar{\phi}(v)]}{d+p\bar{\phi}(v)}
\label{eq6-03}
\end{equation*}
for any $v\geq0$.

Next, for  $p\in(1,\bar{p}],$ choosing $\tilde{p}:=p>1$ and $\tilde{q}:=-q$  in Lemma \ref{lemma3-1}, we obtain
\begin{equation}
\begin{array}{rl}
&\disp{\frac{d}{dt}\int_{\Omega}u^{p}v^{-q}}
\\[3mm]
=&\disp{-p(p-1)\int_{\Omega}u^{p-2}v^{-q}\gamma(v)|\nabla u|^2-q\int_{\Omega}u^{p}v^{-q-2} [pv\phi(v)+d(q+1)]|\nabla v|^2}\\[3mm]
&\disp{+p\int_{\Omega}u^{p-1}v^{-q-1}[(p-1)v\phi(v)+q\gamma(v)+dq]\nabla u\cdot\nabla v}
\\
&\disp{-q\int_{\Omega}u^{p+1}v^{-q-1}+q\int_{\Omega}u^{p}v^{-q}~~ \mbox{for all}~~  t\in(0,T_{max}),}
\\
\end{array}
\label{eq6-04}
\end{equation}
where by the  Young inequality, it follows that
\begin{equation}
\begin{array}{rl}
&\disp{p\int_{\Omega}u^{p-1}v^{-q-1}[(p-1)v\phi(v)+q\gamma(v)+dq]\nabla u\cdot\nabla v}
\\
\leq&\disp{p(p-1)\int_{\Omega}u^{p-2}v^{-q}\gamma(v)|\nabla u|^2}\\
& \ \  \disp +\frac{p}{4(p-1)}\int_{\Omega}\frac{[(p-1)v\phi(v)+q\gamma(v)+dq]^2}{\gamma(v)}u^{p}v^{-q-2}|\nabla v|^2
\end{array}
\label{eq6-05}
\end{equation}
for all $t\in(0,T_{max}).$
Inserting \dref{eq6-05} into \dref{eq6-04} yields that
\begin{equation}
\begin{array}{rl}
&\disp{\frac{d}{dt}\int_{\Omega}u^{p}v^{-q}}
\\[3mm]
\leq&\disp{\int_{\Omega}\bigg\{\frac{p}{4(p-1)}\times\frac{[(p-1)v\phi(v)+q\gamma(v)+dq]^2}{\gamma(v)}-dq(q+1)-pqv\phi(v)\bigg\}u^{p}v^{-q-2}|\nabla v|^2}
\\[3mm]
&+\disp{q\int_{\Omega}u^{p}v^{-q}- q\int_{\Omega}u^{p+1}v^{-q-1}~~ \mbox{for all}~~  t\in(0,T_{max}).}
\\
\end{array}
\label{eq6-06}
\end{equation}
 Denote
\begin{equation}g(p;q,v):=\frac{p}{4(p-1)}\times\frac{[(p-1)\bar{\phi}(v)+q\gamma(v)+dq]^2}{\gamma(v)}-dq(q+1)-pq\bar{\phi}(v).
\label{eq6-07}
\end{equation}
Then from \eqref{eq6-07}, we can get a quadric expression for $q$ as follows
\begin{equation}
\begin{array}{rl}
&4(p-1)\gamma(v)g(p;q,v)\\
=&[p(p-1)^2\bar{\phi}^2(v)+pq^2\gamma^2(v)+d^2pq^2+2p(p-1)q\bar{\phi}(v)\gamma(v)+2dp(p-1)q\bar{\phi}(v)+2dpq^2\gamma(v)]\\
&-4(p-1)\gamma(v)dq(q+1)-4(p-1)\gamma(v)pq\bar{\phi}(v)\\
=&p(p-1)^2\bar{\phi}^2(v)+pq^2\gamma^2(v)+d^2pq^2-2p(p-1)q\bar{\phi}(v)\gamma(v)+2dp(p-1)q\bar{\phi}(v)+2dpq^2\gamma(v)\\
&-4(p-1)\gamma(v)dq(q+1)\\
=&A(v)q^2-B(v)q+C(v),\\
\end{array}
\label{eq6-08}
\end{equation}
where
$A(v)$ as well as  $B(v)$ and $C(v)$ are given by \dref{ABC}.
Now, with Lemma \ref{Lmma-1},
one  can pick $q\in(\frac{2C(v)}{B(v)},\frac{B(v)}{2A(v)}]\cap(0,\min\{\frac{N}{2},p\})$, which ensures that
\begin{equation}\label{abc}
Aq^2\leq\frac{1}{2}Bq \ \ \mathrm{and}\ \ C<\frac{1}{2}Bq.
\end{equation}
With \eqref{abc}, one gets immediately from \eqref{eq6-08} that
$$g(p;q,v)< 0$$
for all $p\in(1,\bar{p}]$ and $v\geq0.$
Substituting the above inequality into \dref{eq6-06} and \dref{eq6-07}, we  get \dref{eq6-01}.
\end{proof}

\section{Proof of main results}
Now we have all necessary estimates to obtain weighted $L^p$-estimates for $u$ with some $p >\frac{N}{2}$, which
is a key step to derive the $L^p$-estimate for $u$ with $p>N$ that finally leads to the global boundedness of solutions.
\subsection{The {\it a priori} $L^p$-estimates}
We first develop a weighted $L^p$-norm of $u$.
\begin{lemma}\label{lemma-011} Let $\Omega\subset\mathbb{R}^N(2\leq N\leq 4)$ be a smooth bounded  domain.
Assume that  $\gamma(v) $ and
$\phi(v)$ satisfy hypotheses $(H_1)$-$(H_3)$.
 Let  $(u,v)$ be a solution to \dref{eq-1} in a maximal time inverval $(0,T_{max})$. Then
for  any $t \in(0,T_{max})$, there exist positive constants  $p>\frac{N}{2}$ as well as  $q>0$ such that
\begin{equation}
\begin{array}{rl}
&\disp{\int_{\Omega}u^{p}(\cdot,t)v^{-q}(\cdot,t)\leq C~~ \mbox{for all}~~  t\in(0,T_{max}),}
\end{array}
\label{eq7-01}
\end{equation}
where $C>0$ is a constant independent of $t$.
\end{lemma}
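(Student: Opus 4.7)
The plan is to convert the differential inequality \eqref{eq6-01} of Lemma~\ref{lem5-03} into an autonomous ordinary differential inequality for $y(t) := \int_\Omega u^p v^{-q}\,dx$ whose right-hand side carries a super-linear absorbing term that dominates the linear growth $qy(t)$, and hence forces $y$ to stay uniformly bounded on $(0,T_{\max})$.

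The first step is a H\"older interpolation. Writing $u^p v^{-q} = (u^{p+1} v^{-q-1})^{p/(p+1)}\, v^{(p-q)/(p+1)}$ and applying H\"older's inequality with conjugate exponents $(p+1)/p$ and $p+1$ yields
\[
\int_\Omega u^p v^{-q} \leq \bigg(\int_\Omega u^{p+1} v^{-q-1}\bigg)^{p/(p+1)} \bigg(\int_\Omega v^{p-q}\bigg)^{1/(p+1)}.
\]
Since the total mass $\int_\Omega u$ is conserved by Lemma~\ref{lemma-010}, Lemma~\ref{lemma-014}(i) applied with $\theta = 1$ and $\mu = p-q$ supplies a uniform-in-time bound $\|v(\cdot,t)\|_{L^{p-q}(\Omega)} \leq C_1$ under the condition $(p-q)(N-2) < N$. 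This is the point where the dimension restriction $N \leq 4$ enters: with $p>N/2$ close to $N/2$ and $q$ chosen from the non-empty window supplied by Lemma~\ref{Lmma-1}, the quantity $p-q$ can be arranged to satisfy this inequality (for $p-q\leq 1$ the $L^1$-bound \eqref{123} on $v$ directly suffices).

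Raising the H\"older estimate to the power $(p+1)/p$ and inserting it into \eqref{eq6-01} produces the autonomous super-linear differential inequality
\[
y'(t) \leq q\, y(t) - \frac{q}{C_1^{(p-q)/p}}\, y(t)^{(p+1)/p}.
\]
Because $(p+1)/p > 1$, the dissipative term overtakes the growth term once $y$ exceeds a certain threshold: indeed $y'(t) \leq 0$ whenever $y(t) \geq C_1^{\,p-q}$, so $y$ cannot cross this level upward. Consequently $y(t) \leq \max\{y(t_0),\, C_1^{\,p-q}\}$ for all $t \in [t_0, T_{\max})$. The initial value $y(t_0)$ is finite for any $t_0>0$ thanks to the strict positivity $v(\cdot,t_0) \geq \eta$ given by Lemma~\ref{lemm4-02} together with the classical regularity of $(u,v)$ at positive times from Lemma~\ref{lemma70}, which also yields the desired bound on the residual interval $[0,t_0]$ by continuity.

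The main obstacle is to reconcile the two independent requirements on the exponent pair $(p,q)$: on one hand Lemma~\ref{Lmma-1} forces $q$ into the (possibly narrow) window $\bigl(2C(v)/B(v),\,B(v)/(2A(v))\bigr] \cap (0,\min\{N/2,p\})$ uniformly in the relevant range of $v$, so that the dissipative inequality \eqref{eq6-01} is available; on the other hand the interpolation step requires $p-q$ small enough that Lemma~\ref{lemma-014}(i) can upgrade the sole $L^1$-bound on $u$ to a uniform $L^{p-q}$-bound on $v$. Both requirements can be met simultaneously because hypothesis $(H_3)$ opens the first window and the assumption $N \leq 4$ makes the second inequality $(p-q)(N-2)<N$ compatible with $p$ close to $N/2$ and a sufficiently large $q$ in the admissible range.
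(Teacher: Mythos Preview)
Your proof is correct and follows essentially the same approach as the paper: both interpolate $\int_\Omega u^p v^{-q}$ between $\int_\Omega u^{p+1}v^{-q-1}$ and $\int_\Omega v^{p-q}$, bound the latter via Lemma~\ref{lemma-014} using the constraint $p-q<N/(N-2)$ (which is where $N\leq 4$ enters), and close an ODE for $y(t)=\int_\Omega u^p v^{-q}$. The only cosmetic difference is that the paper uses Young's inequality to obtain the linear inequality $y'+y\leq C$ and then Gronwall, whereas you use H\"older's inequality to obtain the super-linear inequality $y'\leq qy-cy^{(p+1)/p}$ and a barrier argument; these are equivalent manipulations of the same estimate.
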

\begin{proof}We begin with \dref{eq6-01} and  apply the Young inequality to get
\begin{equation}
\begin{array}{rl}
&(q+1)\disp\int_{\Omega}u^{p}v^{-q}\\[3mm]
= &(q+1)\disp\int_{\Omega}u^{p}v^{-\frac{p(q+1)}{p+1}-[q-\frac{p(q+1)}{p+1}]}\\
\leq &\disp{q\int_{\Omega}u^{p\times\frac{p+1}{p}}v^{-\frac{p(q+1)}{p+1}\times\frac{p+1}{p}}+\frac{1}{p+1}(q \frac{(p+1)}{p})^{-p}(q+1)^{p+1} \int_{\Omega}v^{-[q-\frac{p(q+1)}{p+1}]\times(p+1)}}
\\[3mm]
= &\disp{q\int_{\Omega}u^{p+1}v^{-q-1}+\frac{1}{p+1}\bigg(q \frac{(p+1)}{p}\bigg)^{-p}(q+1)^{p+1} \int_{\Omega}v^{p-q},}
\end{array}
\label{eq7-02}
\end{equation}
which together with \dref{eq6-01} implies
\begin{equation}
\begin{array}{rl}
&\disp{\frac{d}{dt}\int_{\Omega}u^{p}v^{-q}+\int_{\Omega}u^{p}v^{-q}\leq \frac{1}{p+1}\bigg(\frac{q (p+1)}{p}\bigg)^{-p}(q+1)^{p+1} \int_{\Omega}v^{p-q}~~ \mbox{for all}~~  t\in(0,T_{max}).}
\\
\end{array}
\label{eq7-03}
\end{equation}
Therefore applying Lemma \ref{lemma-014} with the fact \dref{eq-012}, we get a constant $C_1 > 0$
such that
\begin{equation}
\begin{array}{rl}
\disp\int_{\Omega}v^{q_0}(\cdot,t)\leq &\disp{C_1~~~ \mbox{for all}~~  t\in(0,T_{max})~~~\mbox{and}~~~1\leq q_0<\frac{N}{N-2}.}
\\
\end{array}
\label{eq7-04}
\end{equation}
Due to $2\leq N\leq4$, it follows that $2\frac{N}{N-2} \in [2,\infty]$.  Hence for any $q>0$, one can choose $p>2$ but close to $2$ such that
$$0<p-q\leq q_0,$$
so that, employing the Young inequality, we derive from \eqref{eq7-04} that
\begin{equation}
\begin{array}{rl}
&\disp{\frac{1}{p+1}\bigg(\frac{q (p+1)}{p}\bigg)^{-p}(q+1)^{p+1} \int_{\Omega}v^{p-q}\leq C_2~~ \mbox{for all}~~  t\in(0,T_{max})}
\\
\end{array}
\label{eq7-05}
\end{equation}
with  some positive constant $C_2$.
For all $t\in(0,T_{max}),$ integrating \dref{eq7-03} from $0$ to $t$, taking into account Lemma \ref{lemma-7}, we get some positive constant $C_3$ such that
\begin{equation}
\begin{array}{rl}
\disp\int_{\Omega}u^{p}(\cdot,t)v^{-q}(\cdot,t)\leq &\disp{C_3~~ \mbox{for all}~~  t\in(0,T_{max}),}
\\
\end{array}
\label{eq7-06}
\end{equation}
which implies \dref{eq7-01} directly.

\end{proof}

\begin{lemma}\label{lem8-01}
Let $\Omega\subset\mathbb{R}^N(2\leq N\leq 4)$ be a smooth bounded  domain. Then for any $p>N$ there is a constant $C_0(p)>0$ independent of $t$ such that the solution of \dref{eq-1} satisfies
\begin{equation}\label{LP}
\|u(\cdot,t)\|_{L^p(\Omega)} \leq C_0(p)~~\mbox{for all}~~t\in(0,T_{max}).
\end{equation}
\end{lemma}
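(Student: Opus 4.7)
The target is to convert the weighted estimate $\int_\Omega u^{p_0}v^{-q_0}\le C$ from Lemma \ref{lemma-011} (with $p_0>N/2$ close to $N/2$ and $q_0>0$) into an unweighted $L^p$-bound for $u$ with $p>N$. A naive attempt using only the lower bound $v\ge\eta$ from Lemma \ref{lemm4-02} fails, because $v\ge\eta$ yields $v^{-q_0}\le\eta^{-q_0}$, which gives only $\int u^{p_0}v^{-q_0}\le\eta^{-q_0}\int u^{p_0}$---the wrong direction. One therefore must extract an upper bound on $v$ first.

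To produce one, I would combine the weighted bound with the $L^{q'}$-estimate $\int_\Omega v^{q'}\le C$ for $q'<N/(N-2)$ (already observed in \eqref{eq7-04}), by applying Hölder's inequality to the pointwise identity $u^\theta=(u^{p_0}v^{-q_0})^{\theta/p_0}\cdot v^{q_0\theta/p_0}$ to get
$$\int_\Omega u^\theta\le\Big(\int_\Omega u^{p_0}v^{-q_0}\Big)^{\theta/p_0}\Big(\int_\Omega v^{q_0\theta/(p_0-\theta)}\Big)^{(p_0-\theta)/p_0}$$
for any $\theta\in(1,p_0)$ satisfying $q_0\theta/(p_0-\theta)\le q'$. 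Tuning the parameters so that the admissible window for $\theta$ reaches strictly above $N/2$ (this is where the restriction $2\le N\le4$ comes into play, through $N/(N-2)$), Lemma \ref{lemma-014}(i) applied with this $\theta$ and $\mu=\infty$ yields $\|v(\cdot,t)\|_{L^\infty(\Omega)}\le M$ uniformly in $t$. Combined with $v\ge\eta$, this places $v$ in the compact interval $[\eta,M]$, and by continuity of $\gamma,\phi$ under $(H_1)$ and $(H_2)$, both $\gamma(v)$ and $\phi(v)$ are bounded between positive constants. In particular, the degeneracy of diffusion is ruled out, and returning to the weighted estimate one also concludes $\int_\Omega u^{p_0}\le M^{q_0}C$.

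With the degeneracy gone, I would test the first equation of \eqref{eq-1} against $p u^{p-1}$ for fixed $p>N$ to obtain
$$\frac{d}{dt}\int_\Omega u^p+p(p-1)\int_\Omega u^{p-2}\gamma(v)|\nabla u|^2=p(p-1)\int_\Omega u^{p-1}\phi(v)\nabla u\cdot\nabla v,$$
and absorb half of the cross term into the dissipation by Young's inequality to reach
$$\frac{d}{dt}\int_\Omega u^p+\tilde c\int_\Omega|\nabla u^{p/2}|^2\le C\int_\Omega u^p|\nabla v|^2.$$
Lemma \ref{lemma-014}(ii) applied with the same $\theta>N/2$ provides $\|\nabla v(\cdot,t)\|_{L^\mu(\Omega)}\le C$ for some $\mu>N$, so Hölder's inequality on the right-hand side reduces matters to controlling $\|u^{p/2}\|_{L^{2\mu/(\mu-2)}}^2$; Gagliardo-Nirenberg (Lemma \ref{lemma4sddd1ffgg}) with the $L^1$-bound on $u$ (mass conservation, Lemma \ref{lemma-010}) allows one to absorb $\|\nabla u^{p/2}\|_{L^2}^2$ into the dissipation up to a lower-order remainder. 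The resulting inequality $y'+Ay\le h$ meets the hypotheses of Lemma \ref{lemma-7}, yielding $\|u(\cdot,t)\|_{L^p(\Omega)}\le C_0(p)$.

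The main obstacle is the first stage: the weighted bound is only just supercritical in $p_0>N/2$, and squeezing out an unweighted $L^\theta$-estimate of $u$ with $\theta>N/2$ demands very tight compatibility between $p_0$, $q_0$ and the available integrability of $v$. If the direct Hölder computation above is too narrow for some admissible $(p_0,q_0)$ from Lemma \ref{lemma-011}, the backup plan is a bootstrap that alternates testing the $v$-equation with $v^{k-1}$ (controlling the source $\int_\Omega uv^{k-1}$ by Young's inequality against the weighted bound) and Lemma \ref{lemma-014}, progressively enlarging the $L^k$-norm of $v$ until reaching $L^\infty$, at the cost of more bookkeeping but the same net effect.
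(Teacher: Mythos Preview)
Your overall plan is right, and the second half---once $v\in[\eta,M]$ is known, test the $u$-equation with $u^{p-1}$, use the $L^\mu$-bound on $\nabla v$ with $\mu>N$, and close via Gagliardo--Nirenberg and a Grönwall-type argument---matches the paper's proof essentially line for line.

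The trouble is the first stage, and you correctly flag it as the obstacle. Your direct H\"older step
\[
\int_\Omega u^\theta\le\Big(\int_\Omega u^{p_0}v^{-q_0}\Big)^{\theta/p_0}\Big(\int_\Omega v^{q_0\theta/(p_0-\theta)}\Big)^{(p_0-\theta)/p_0}
\]
is the same inequality the paper uses, but you then try to bound the $v$-factor by the \emph{fixed} a priori estimate \eqref{eq7-04}, namely $\int_\Omega v^{q'}\le C$ for $q'<\frac{N}{N-2}$, which comes from $\|u\|_{L^1}$. This forces $q_0\theta/(p_0-\theta)<\frac{N}{N-2}$ together with $\theta>\frac{N}{2}$, which for $N=3,4$ reduces to $q_0<\frac{2p_0-N}{N-2}$. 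Since $p_0$ is only barely above $\frac{N}{2}$, the right-hand side is tiny, whereas Lemma~\ref{lem5-03} only guarantees $q_0\in\big(\frac{2C(v)}{B(v)},\frac{B(v)}{2A(v)}\big]$, a lower bound you cannot push to zero. So the direct argument does not close in general.

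The paper resolves this with a single self-referential step that you did not hit on: instead of estimating the $v$-factor by the $L^1$-based bound, it applies Lemma~\ref{lemma-014}(i) with input exponent $l_0$ (the \emph{same} $l_0>\frac{N}{2}$ you are trying to reach) to obtain
\[
\|v\|_{L^{l_0q_0/(\kappa-l_0)}}\le C\big(1+\sup_t\|u\|_{L^{l_0}}\big),
\]
which feeds back into the H\"older estimate to give
\[
\sup_t\|u\|_{L^{l_0}}\le C\big(1+(\sup_t\|u\|_{L^{l_0}})^{q_0/\kappa}\big).
\]
Because $q_0<\frac{N}{2}<\kappa$, the exponent $q_0/\kappa<1$ and the inequality closes. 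The applicability condition for Lemma~\ref{lemma-014}(i) here is $l_0<\frac{N(\kappa-q_0)}{N-2q_0}$, and one checks that this interval always contains points above $\frac{N}{2}$ simply because $\kappa>\frac{N}{2}$---no smallness of $q_0$ is needed. Your iterative backup (alternating $v$-tests and Lemma~\ref{lemma-014}) is in a different spirit and left too vague; the paper's one-shot closure is both cleaner and what actually makes the argument go through.
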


\begin{proof}
Firstly, according to Lemma  \ref{lemma-011}, we can pick $\kappa>\frac{N}{2}$ and $q_0\in (0,\frac{N}{2})$ such that 
\begin{equation}
\begin{array}{rl}
&\disp{\int_{\Omega}u^{\kappa}v^{-q_0}\leq C_1~~~\mbox{for all}~~t\in(0,T_{max})}
\\
\end{array}
\label{eq-802}
\end{equation}
holds with some $C_1>0.$ Since $q_0<\frac{N}{2}$ and $\kappa>\frac{N}{2}$, we may fix a number  $l_0\in(\frac{N}{2},\kappa)$ such that
$l_0<\frac{N(\kappa-q_0)}{N-2q_0}$.
Using \dref{eq-802} and H\"{o}lder inequality, we find that
\begin{equation}
\begin{array}{rl}
\left(\disp\int_{\Omega}u^{l_0}\right)^{\frac{1}{l_0}}\leq&\disp{\left(\int_{\Omega}u^{\kappa}v^{-q_0}\right)^{\frac{1}{\kappa}}
\left(\int_{\Omega}v^{\frac{l_0q_0}{\kappa-l_0}}\right)^{\frac{\kappa-l_0}{l_0\kappa}}}
\\
\leq&\disp{C_1^{\frac{1}{\kappa}}
\left(\int_{\Omega}v^{\frac{l_0q_0}{\kappa-l_0}}\right)^{\frac{\kappa-l_0}{l_0\kappa}}}
\\
=&\disp{C_1^{\frac{1}{\kappa}}
\|v(\cdot,t)\|_{L^{{\frac{l_0q_0}{\kappa-l_0}}}(\Omega)}^\frac{q_0}{\kappa}~~~\mbox{for all}~~t\in(0,T_{max}).}
\\
\end{array}
\label{eq-803}
\end{equation}
Since, $l_0<\frac{N(\kappa-q_0)}{N-2q_0}$ implies that
$$\frac{N}{2}\bigg(\frac{1}{l_0}-\frac{\kappa-l_0}{l_0q_0}\bigg)<1,$$
so that, by (i) of Lemma \ref{lemma-014}  with \dref{eq-012}, we have
\begin{equation}\begin{array}{rl}
\sup\limits_{t>0}\| v(\cdot,t)\|_{L^\frac{l_0q_0}{\kappa-l_0}(\Omega)}\leq  C_2(1+\sup\limits_{t>0}\|u(\cdot,t)\|_{L^{l_0}(\Omega)})\\
\end{array}
\label{eq-804}
\end{equation}
with some positive constant $C_2$.
Therefore, there is $C_3 > 0$ fulfilling
\begin{equation}\begin{array}{rl}
\sup\limits_{t>0}\| u(\cdot,t)\|_{L^{l_0}(\Omega)}\leq  C_3[1+(\sup\limits_{t>0}\|u(\cdot,t)\|_{L^{l_0}(\Omega)})^{\frac{q_0}{\kappa}}]\\
\end{array}
\label{eq-805}
\end{equation}
by using \dref{eq-803}.
Observing that $\frac{q_0}{\kappa}<1$ due to $\kappa>\frac{N}{2}>q_0$, we derive that there exists  a constant $\eta_1>0$ such that
\begin{equation}
\begin{array}{rl}
&\disp{\sup\limits_{t>0}\|u(\cdot,t)\|_{L^{{l_0}}(\Omega)}\leq\eta_1.}
\\
\end{array}
\label{eq-806}
\end{equation}
Now,   collecting  \dref{eq-012} and \dref{eq-806},
we derive that for some $r_0\geq 1$  satisfying $r_0> \frac{N}{2}$, there is a constant $\eta_2>0$ such that
\begin{equation}
\begin{array}{rl}
&\disp{\sup\limits_{t>0}\|u(\cdot,t)\|_{L^{{r_0}}(\Omega)}\leq\eta_2.}
\\
\end{array}
\label{eq-807}
\end{equation}
Notice that $\frac{N r_0}{N- r_0}>N$ due to $r_0> \frac{N}{2}$. Then resorting to the variation-of-constants formula for $v$ and $L^p$-$L^q$ estimates for the heat semigroup again, we derive that for $\theta\in(N,\frac{N r_0}{N- r_0})$, there exist a positive constant $C_4$ such that
\begin{equation}\begin{array}{rl}
&\|\nabla v(\cdot,t)\|_{L^\theta(\Omega)}\\
\leq&\disp \|\nabla e^{t(d\Delta-1)}v_0\|_{L^\theta(\Omega)} +\int_{0}^{t}\|\nabla e^{(t-s)(d\Delta-1)}u(\cdot,s)\|_{L^\theta(\Omega)} ds\\
\leq&\disp C_4\|\nabla v_0\|_{L^\infty(\Omega)} +C_4\int_{0}^{t}(1+(t-s)^{-\frac{1}{2}-\frac{N}{2}(\frac{1}{ r_0}-\frac{1}{\theta})})e^{-\lambda_1(t-s) }\|u(\cdot,s)\|_{L^ {r_0}(\Omega)} ds\\
\leq&\disp C_4\|\nabla v_0\|_{L^\infty(\Omega)} +C_4\eta_2 \int_{0}^{t}(1+(t-s)^{-\frac{1}{2}-\frac{N}{2}(\frac{1}{ {r_0}}-\frac{1}{\theta})})e^{-\lambda_1(t-s) } ds,\\
\end{array}
\label{eq-808}
\end{equation}
where $\lambda_1>0$ denotes the first nonzero eigenvalue of $-\Delta$ in $\Omega$ under Neumann boundary conditions. With the facts $v_0 \in W^{1,\infty}(\Omega)$ and
$$\int_{0}^{t}(1+(t-s)^{-\frac{1}{2}-\frac{N}{2}(\frac{1}{ {r_0}}-\frac{1}{\theta})})e^{-\lambda_1(t-s) } ds\leq \int_{0}^{\infty}(1+s^{-\frac{1}{2}-\frac{N}{2}(\frac{1}{ {r_0}}-\frac{1}{\theta})})e^{-\lambda_1s } ds<+\infty,$$
we find a constant $C_5 > 0$ such that
\begin{equation}
\int_{\Omega}|\nabla v|^{\theta} \leq C_5~~\mbox{for all}~~ t\in(0, T_{max})
\label{eq-809}
\end{equation}
by \dref{eq-808}, where $\theta\in(N,\frac{N {r_0}}{N- {r_0}})$.
Next, for any $p\geq\theta,$ taking  ${v^{p-1}}$ as a test function for the second  equation of \dref{eq-1} and using  the H\"{o}lder inequality  and \dref{eq-807} yields  that
\begin{equation}
\begin{array}{rl}
&\disp\frac{1}{p}\disp\frac{d}{dt}\|{v}\|^{{{p}}}_{L^{{p}}(\Omega)}+(p-1)
\int_{\Omega} {v^{p-2}}|\nabla v|^2+ \int_{\Omega} v^{p}\\
=&\disp{\int_{\Omega} uv^{p-1}}\\
\leq&\disp{\left(\int_{\Omega}u^{r_0}\right)^{\frac{1}{r_0}}\left(\int_{\Omega}v^{\frac{r_0}{r_0-1}(p-1)}\right)^{\frac{r_0-1}{r_0}}}\\
\leq&\disp{\eta_2\left(\int_{\Omega}v^{\frac{r_0}{r_0-1}(p-1)}\right)^{\frac{r_0-1}{r_0}}~~~\mbox{for all}~~t\in (0, T_{max}),}
\end{array}
\label{eq-810}
\end{equation}
where $\eta_2$ is the same as \dref{eq-807}.
Now, due to \dref{123}, in light of the Gagliardo--Nirenberg inequality and Young inequality, we derive that there exist positive constants $C_{6}$ and $C_{7}$  
such that
\begin{equation}
\begin{array}{rl}
\disp\eta_2\left(\int_{\Omega}v^{\frac{r_0}{r_0-1}(p-1)}\right)^{\frac{r_0-1}{r_0}} =&\disp{\eta_2\| { v^{\frac{p}{2}}}\|^{{\frac{2(p-1)}{p}}}_{L^{\frac{2r_0(p-1)}{p(r_0-1)}}(\Omega)}}\\
\leq&\disp{C_{6}\bigg(\| \nabla{ v^{\frac{p}{2}}}\|^{2\rho_1}_{L^{2}(\Omega)}\|{ v^{\frac{p}{2}}}\|^{{\frac{2(p-1)}{p}}-2\rho_1}_{L^{\frac{2}{p}}(\Omega)}+
\|{ v^{\frac{p}{2}}}\|_{L^{\frac{2}{p}}(\Omega)}^{{\frac{2(p-1)}{p}}} \bigg)}\\
\leq&\disp{C_{7}(\| \nabla{ v^{\frac{p}{2}}}\|^{2\rho_1}_{L^{2}(\Omega)}+1)}\\
\leq&\disp{\frac{4(p-1)}{p^2}\| \nabla{ v^{\frac{p}{2}}}\|^{2}_{L^{2}(\Omega)}+
C_{8}}\\
=&\disp{(p-1)
\int_{\Omega} {v^{p-2}}|\nabla v|^2+
C_{8}~~~\mbox{for all}~~t\in (0, T_{max}),}\\
\end{array}
\label{eq-811}
\end{equation}
where $$\rho_1=\frac{\frac{Np}{2}-\frac{Np(r_0-1)}{2r_0(p-1)}}{1-\frac{N}{2}+\frac{Np}{2}}$$
subject to the fact that  $\rho<1$ by $r_0>\frac{N}{2}$.
Inserting \dref{eq-811} into \dref{eq-810}, 
 we get a positive constant  $C_{9}$ such that
\begin{equation}
\begin{array}{rl}
\disp\frac{1}{p}\disp\frac{d}{dt}\int_{\Omega} v^{p}+\int_{\Omega} v^{p}
\leq & C_{9}~~~\mbox{for all}~~t\in (0, T_{max}),\\
\end{array}
\label{eq-812}
\end{equation}
which entails by the Gronwall inequality that
\begin{equation}
\begin{array}{rl}
\disp\int_{\Omega} v^{p}(\cdot,\cdot,t) \leq C_{10}~~\mbox{for all}~~ t\in(0, T_{max})~~~\mbox{and}~~~p\geq\theta.
\end{array}\label{eq-813}
\end{equation}
In view of $\theta>N$,  by the Sobolev embedding theorem  along with  \dref{eq-809} and \dref{eq-813}, we can find a constant
$C_{11} > 0$ such that
\begin{equation}
\begin{array}{rl}
&\disp{\|v(\cdot,t)\|_{L^{{\infty}}(\Omega)}\leq C_{11}~~~\mbox{for all}~~ t\in(0, T_{max}).}
\\

\end{array}
\label{eq-814}
\end{equation}
This along with the hypotheses $(H_1)$-$(H_2)$ gives us positive constants $C_{12}$ and  $C_{13}$ such that
\begin{equation}
\begin{array}{rl}
\disp C_{12}\leq\gamma(v),\phi(v)\leq &\disp{C_{13}~~~\mbox{for all}~~ t\in(0, T_{max}).}
\\
\end{array}
\label{eq-815}
\end{equation}
Next, we multiply the first equation of \dref{eq-1}
by $u^{p-1} (p > N+1)$ and integrate the resulting equation by parts to obtain some positive constants $C_{14}$ and $C_{15}$ so that
\begin{equation}\label{eq-816}
\begin{array}{rl}
\disp{\frac{1}{p}\frac{d}{dt}\| u  \|^{p}_{L^p(\Omega)}+C_{{14}}(p-1)\int_{\Omega} u  ^{p-2}|\nabla u  |^2{}{}}
\leq&\disp{C_{15}\int_{\Omega} u  ^{p}|\nabla v  |^2~~\mbox{for all}~~ t\in(0, T_{max})}\\
\end{array}
\end{equation}
by using the Young inequality and \dref{eq-815}. Next we estimate the term on the right hand side of \dref{eq-816}. In fact, with the H\"{o}lder inequality, one has
\begin{equation}
\begin{array}{rl}
\disp{ C_{15} \disp\int_\Omega u ^{p } |\nabla v  |^2}
&\leq\disp{ C_{15} \left(\disp\int_\Omega u ^{\frac{\varrho{p } }{\varrho-1}}\right)^{\frac{\varrho-1}{\varrho}}\left(\disp\int_\Omega |\nabla v  |^{2\varrho}\right)^{\frac{1}{\varrho}}}\\[3mm]
&\leq\disp{ C_{16}\|   u ^{\frac{p}{2}}\|^{2}_{L^{\frac{2\varrho}{\varrho-1}}(\Omega)}
,}\\
\end{array}
\label{eq-818}
\end{equation}
where $\varrho=\frac{\theta}{2}$ and $\theta$ is the same as \dref{eq-809}.
In view of  $p> N+1$, we have
$$\frac{1}{p}\leq{\frac{\varrho}{\varrho-1}}<\frac{N}{N-2}.$$
Furthermore an application of the Gagliardo-Nirenberg inequality along with Lemma \ref{lemma-010} yields  some positive constants $C_{17}$ and $ C_{18}$ such that
\begin{equation}
\begin{array}{rl}
\disp{C_{16}\|   u ^{\frac{{p}}{2}}\|
^{2}_{L^{\frac{2\varrho}{\varrho-1}}(\Omega)}}
\leq&\disp{C_{17}\Big(\|\nabla    u ^{\frac{{p}}{2}}\|_{L^2(\Omega)}^{2\rho_2}\| u ^{\frac{{p}}{2}}\|_{L^\frac{2}{{p}}(\Omega)}^{2-2\rho_2}+\| u ^{\frac{{p}}{2}}\|_{L^\frac{2}{{p}}(\Omega)}^{2}\Big)}\\[3mm]
\leq&\disp{C_{18}(\|\nabla    u ^{\frac{{p}}{2}}\|_{L^2(\Omega)}^{2\rho_2}+1),}\\
\end{array}
\label{eq-819}
\end{equation}
where $$\rho_2=\frac{Np-N+\frac{N}{\varrho}}{2-N+Np}.$$
Due to $\varrho>N/2$, one can check $0<\rho_2<1$. Noticing $\|\nabla    u ^{\frac{{p}}{2}}\|_{L^2(\Omega)}^{2}=\frac{4}{p^2}\int_\Omega u^{p-2}|\nabla u|^2$, then the Young inequality applied to $\|\nabla    u ^{\frac{{p}}{2}}\|_{L^2(\Omega)}^{2\rho_2}$ yields a constant $C_{19}>0$ so that we have from \eqref{eq-818}-\eqref{eq-819} that
$C_{15} \int_\Omega u ^{p } |\nabla v  |^2\leq \frac{C_{14}(p-1)}{2}\int_{\Omega} u  ^{p-2}|\nabla u  |^2+C_{19}$, which applied to \eqref{eq-816} leads to
\begin{equation}
\begin{array}{rl}
\disp{\frac{1}{p}\frac{d}{dt}\| u  \|^{p}_{L^p(\Omega)}+\frac{C_{14}(p-1)}{2}\int_{\Omega} u  ^{p-2}|\nabla u  |^2}
\leq&\disp{C_{19}~~\mbox{for all}~~ t\in(0, T_{max})}.
\end{array}
\label{eq-820}
\end{equation}
On the other hand, if we define  ${\rho_3=\frac{\frac{Np}{2}-\frac{N}{2}}{1-\frac{N}{2}+\frac{Np}{2}}}<1$ with $p>1$. Then by \dref{eq-012}, we can find positive constants  $C_{20}, C_{21}, C_{22}$  resulting from the Gagliardo-Nirenberg inequality and the Young inequality  such that
\begin{equation}
\begin{array}{rl}
\disp\int_{\Omega}u^{p}=&\disp{\|  u^{\frac{p}{2}}\|^{2}_{L^{2}(\Omega)}}\\
\leq&\disp{C_{20}(\|\nabla   u^{\frac{p}{2}}\|_{L^2(\Omega)}^{2\rho_3}\|  u^{\frac{p}{2}}\|_{L^\frac{2}{p }(\Omega)}^{2-2\rho_3}+\|  u^{\frac{p}{2}}\|_{L^\frac{2}{p }(\Omega)}^{2})}\\
\leq&\disp{C_{21}(\|\nabla   u^{\frac{p}{2}}\|_{L^{2}(\Omega)}^{2\rho_3}+1)}\\
\leq&\disp{\frac{C_{14}(p-1)}{2}\times\frac{4}{p^2}\|\nabla   u^{\frac{p}{2}}\|_{L^{2}(\Omega)}^{2}+C_{22}}\\
=&\disp{\frac{C_{14}(p-1)}{2}\int_{\Omega} u  ^{p-2}|\nabla u  |^2+C_{22}}.
\end{array}
\label{eq-821}
\end{equation}
Inserting \dref{eq-821} into \dref{eq-820}, we get a positive constant $C_{23}$ such that
\begin{equation}
\begin{array}{rl}
\disp\frac{1}{p}\disp\frac{d}{dt}\|{u}\|^{{{p}}}_{L^{{p}}(\Omega)}+\int_{\Omega}u^{p}
\leq & C_{23}~~\mbox{for all}~~ t\in(0, T_{max}),\\
\end{array}
\label{eq-822}
\end{equation}
which upon an use of Gronwall's inequality yields \eqref{LP} and hence completes the proof.
\end{proof}

\subsection{Proof of Theorem \ref{theorem3}}
To prove Theorem \ref{theorem3}, from the local existence theorem with extension criterion in Lemma \ref{lemma70}, it suffices to show that there is a constant $C>0$ independent of $t$ such that
\begin{equation}\label{eq-801}
\disp{\|u(\cdot,t)\|_{L^{\infty}(\Omega)}+\|\nabla v(\cdot,t)\|_{L^{\infty}(\Omega)}<C} \ \ \mathrm{for \ all} \ t \in (0, T_{max}).
\end{equation}
Indeed with Lemma \ref{lem8-01}, employing  the standard estimate for Neumann semigroup (see \cite[Lemma 1]{Kowalczyk}), one can find a positive constant $c_{1}$ independent of $t$ such that
\begin{equation*}
\begin{array}{rl}
\disp{\|v(\cdot,t)\|_{W^{1,\infty}(\Omega)}}
\leq\disp{c_{1}~~\mbox{for all}~~ t\in(0, T_{max}).}\\
\end{array}
\label{eq-824}
\end{equation*}
Since $\gamma(v)$ has a positive lower bound, see \eqref{eq-815}, one can employ the Moser iteration (see the proof of \cite[Lemma 3.5]{Wang-2}) to get a constant $c_2>0$ such that $\|u(\cdot,t)\|_{L^{\infty}(\Omega)}\leq c_2$. We hence get \eqref{eq-801} and complete the proof.

\subsection{Proof of Theorem \ref{theorem4}}
In the case of $\gamma(v)=\frac{\sigma}{v^\lambda} (\sigma, \lambda>0)$ with $\phi(v)=(\alpha-1)\gamma'(v)$ for $0<\alpha<1$, we can compute that
$$F(v):=\frac{d\gamma(v)}{v\phi(v)(v\phi(v)+d-\gamma(v))_+}=\frac{d}{\lambda(1-\alpha)\Big(\frac{[\lambda(1-\alpha)-1]\sigma}{v^\lambda}+d\Big)_+}$$
where the hypothesis $(H_3)$ is equivalent to
\begin{equation}\label{H3E}
\inf_{v\geq0} F(v)>\frac{N}{2}.
\end{equation}
Then we have two cases to proceed.

{\bf Case 1}. If $\lambda(1-\alpha)-1>0$, namely $\lambda>\frac{1}{1-\alpha}$, we have from Lemma \ref{lemm4-02} that
$$\inf_{v\geq0} F(v)>\frac{N}{2} \Leftrightarrow \frac{d}{\lambda(1-\alpha)\Big(\frac{[\lambda(1-\alpha)-1]\sigma}{\eta^\lambda}+d\Big)}>\frac{N}{2}.$$

{\bf Case 2}. If $\lambda(1-\alpha)-1\leq 0$, namely $\lambda\leq\frac{1}{1-\alpha}$, then we can check
$$\sup\limits_{v\geq 0}\bigg(\frac{[\lambda(1-\alpha)-1]\sigma}{v^\lambda}+d\bigg)_+=d$$
and hence
$$\inf_{v\geq0} F(v)=\frac{1}{\lambda(1-\alpha)}.$$
Then the condition \eqref{H3E} becomes
$$\frac{1}{\lambda(1-\alpha)}>\frac{N}{2}.$$

Combining the results in Case 1 and Case 2 along with Theorem \ref{theorem3} completes the proof of Theorem \ref{theorem4}.


\bigbreak

\noindent {\bf Acknowledgement}:
The research of Z.A. Wang was supported by the Hong Kong RGC GRF grant No. 15303019 (Project ID P0030816) and an internal grant no. UAH0 from the Hong Kong Polytechnic University. The work of J. Zheng was partially supported by  Shandong Provincial
Science Foundation for Outstanding Youth (No. ZR2018JL005), the National Natural
Science Foundation of China (No. 11601215)  and Project funded by China
Postdoctoral Science Foundation (No. 2019M650927, 2019T120168).

\end{document}